\newtheorem{theorem}{Theorem}[section]
\newtheorem{proposition}[theorem]{Proposition}
\newtheorem{lemma}[theorem]{Lemma}
\newtheorem{corollary}[theorem]{Corollary}
\theoremstyle{definition}
\newtheorem{definition}[theorem]{Definition}
\theoremstyle{remark}
\newtheorem*{ac}{Acknowledgments}
\numberwithin{equation}{section}
\newtheorem*{tpf}{{\it Proof of Theorem \ref{thm0}}}
\def\mod{\mathrm{mod}}
\def\Coker{\mathrm{Cok}}
\def\rank{\mathrm{rank}}
\def\NF{\mathcal{V}}
\def\m{\mathfrak m}
\def\p{\mathfrak p}
\def\q{\mathfrak q}
\def\fP{\mathfrak P}
\def\Z{\Bbb Z}
\def\Spec{\mathrm{Spec}}
\def\CM{\mathrm{CM}}
\def\MF{\mathrm{MF}}
\begin{document}

\title[CM modules over hypersurfaces of countable CM type]{On the structure of Cohen-Macaulay modules over hypersurfaces of countable Cohen-Macaulay representation type}

\author{Tokuji Araya}
\address{Liberal Arts Division, Tokuyama College of Technology, Gakuendai, Shunan, Yamaguchi 745-8585, Japan}
\email{araya@tokuyama.ac.jp}

\author{Kei-ichiro Iima}
\address{Department of Liberal Studies, Nara National College of Technology, 22 Yata-cho, Yamatokoriyama, Nara 639-1080, Japan}
\email{iima@libe.nara-k.ac.jp}

\author{Ryo Takahashi}
\address{Department of Mathematical Sciences, Faculty of Science, Shinshu University, 3-1-1 Asahi, Matsumoto, Nagano 390-8621, Japan/Department of Mathematics, University of Nebraska, Lincoln, NE 68588-0130, USA}
\email{takahasi@math.shinshu-u.ac.jp}
\urladdr{http://math.shinshu-u.ac.jp/~takahasi}
\thanks{The third author was partially supported by JSPS Grant-in-Aid for Young Scientists (B) 22740008 and by JSPS Postdoctoral Fellowships for Research Abroad}
\keywords{hypersurface, maximal Cohen-Macaulay module, countable Cohen-Macaulay representation type, stable category, Kn\"orrer's periodicity}
\subjclass[2000]{Primary 13C14; Secondary 16G60}
\begin{abstract}
Let $R$ be a complete local hypersurface over an algebraically closed field of characteristic different from two, and suppose that $R$ has countable Cohen-Macaulay (CM) representation type.
In this paper, it is proved that the maximal Cohen-Macaulay (MCM) $R$-modules which are locally free on the punctured spectrum are dominated by the MCM $R$-modules which are not locally free on the punctured spectrum.
More precisely, there exists a single $R$-module $X$ such that the indecomposable MCM $R$-modules not locally free on the punctured spectrum are $X$ and its syzygy $\Omega X$ and that any other MCM $R$-modules are obtained from extensions of $X$ and $\Omega X$.
\end{abstract}
\maketitle
\section{Introduction} 

Let $R$ be a complete local hypersurface over an algebraically closed field.
Suppose that the characteristic of $k$ is zero and that $R$ has finite Cohen-Macaulay (CM) representation type, namely, there exist only finitely many isomorphism classes of indecomposable maximal Cohen-Macaulay (MCM) $R$-modules.
Then $R$ is isomorphic to the residue ring $k[[x_0,x_1,x_2,\ldots ,x_d]]/(f)$ where $f$ is one of the following polynomials:
$$
\begin{array}{ll}
(A_n) (n\ge 1) & x_0^2+x_1^{n+1}+x_2^2+\cdots +x_d^2, \\
(D_n) (n\ge 4) & x_0^2x_1+x_1^{n-1}+x_2^2+\cdots +x_d^2, \\
(E_6) & x_0^3+x_1^4+x_2^2+\cdots +x_d^2, \\
(E_7) & x_0^3+x_0x_1^3+x_2^2+\cdots +x_d^2, \\
(E_8) & x_0^3+x_1^5+x_2^2+\cdots +x_d^2.
\end{array}
$$
In this case, all objects and morphisms in the category $\CM(R)$ of MCM $R$-modules have been classified completely, that is, the Auslander-Reiten quiver of the stable category $\underline{\CM}(R)$ of $\CM(R)$ has been obtained.
For the details, see \cite{BGS,K,Y}. 

Now, assume that $k$ has characteristic different from two, and that $R$ has countable CM representation type, namely, there exist infinitely but only countably many isomorphism classes of indecomposable MCM $R$-modules.
Then $R$ is isomorphic to $k[[x_0,x_1,x_2,\ldots ,x_d]]/(f)$ where $f$ is either of the following \cite[(1.6)]{GK}:
$$
\begin{array}{ll}
(A_\infty^d) & x_0^2+x_2^2+\cdots +x_d^2, \\
(D_\infty^d) & x_0^2x_1+x_2^2+\cdots +x_d^2.
\end{array}
$$
In this case, all objects in $\CM(R)$ have been classified completely \cite{BGS,BD,K}, but morphisms in $\CM(R)$ have not. 
The purpose of this paper is to investigate the relationships among objects in $\CM(R)$ by focusing on the objects that are locally free on the punctured spectrum of $R$.
Modules locally free on the punctured spectrum have recently been studied in relation to whose nonfree loci; see \cite{res,stcm}.
We also use nonfree loci to get our results.

Let us introduce here some notation.
We denote by $\mathcal{P}(R)$ the full subcategory of $\CM(R)$ consisting of all modules that are locally free on the punctured spectrum of $R$.
Let $\mathcal{M}(R)$ be the set of nonisomorphic indecomposable MCM $R$-modules that are {\em not} locally free on the punctured spectrum of $R$, and let $\NF(M)$ be the nonfree locus of a finitely generated $R$-module $M$.
(Recall that the nonfree locus of $M$ is defined as the set of prime ideals $\p$ of $R$ such that the $R_\p$-module $M_\p$ is nonfree.)
Let $F_1\overset{\partial}{\to}F_0\to M\to 0$ be part of a minimal free resolution of $M$.
Then the (first) syzygy of $M$ is by definition the image of the map $\partial$ and denoted by $\Omega M$.
(Note that it is uniquely determined up to isomorphism.)

The main theorem of this paper is the following. 

\begin{theorem}\label{thm0}
Let $k$ be an algebraically closed field of characteristic different from $2$.
Let $R$ be a complete local hypersurface over k of countable CM representation type.
Then, as we stated above, $R$ is (isomorphic to) a residue ring $k[[x_0,x_1,x_2,\ldots ,x_d]]/(f)$, where $f$ is either $({A}^d_{\infty})$ or $({D}^d_{\infty})$.
Let $\p_R = (x_0, x_2, \ldots , x_d)$ and $\m_R = (x_0, x_1, x_2, \ldots , x_d)$ be ideals of $R$. 
The following hold. 
\begin{enumerate}[\rm (1)]
\item
There exists an $R$-module $X_R$ such that
\begin{enumerate}[\rm (a)]
\item
$\mathcal{M}(R) = \{ X_R, \Omega(X_R)\}$,
\item
$\NF(X_R) = \{ \p_R, \m_R\} = \NF(\Omega(X_R))$.
\end{enumerate}
\item
For each indecomposable $R$-module $M \in \mathcal{P}(R)$, there is an exact sequence
$$
0 \to L \to M\oplus R^n \to N \to 0
$$
of $R$-modules with $L, N \in \mathcal{M}(R)$ and $n\ge 0$. 
\end{enumerate}
\end{theorem}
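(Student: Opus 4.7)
The plan is to reduce the classification of MCM modules over $R$ to a local analysis at $\p_R$ (which turns out to be an $A_1$-type hypersurface singularity of dimension $d-1$) and then to globalize using the explicit MCM classifications combined with Kn\"orrer periodicity. First I would apply the Jacobian criterion to $f$: this gives $\mathrm{Sing}(R) = V(\p_R) = \{\p_R, \m_R\}$ in both $(A_\infty^d)$ and $(D_\infty^d)$, so the nonfree locus of any MCM module sits in $\{\p_R, \m_R\}$ and the indecomposables split as $\mathcal{P}(R) \sqcup \mathcal{M}(R)$. Next I would analyze $R_{\p_R}$: in the $(A_\infty^d)$ case inverting $x_1$ leaves $x_0^2 + x_2^2 + \cdots + x_d^2$, an $A_1$ singularity of dimension $d-1$; in the $(D_\infty^d)$ case the substitution $x_0 \mapsto x_0/\sqrt{x_1}$, available after a finite \'etale extension (since $\mathrm{char}\,k \neq 2$ and $k$ is algebraically closed), reduces $x_0^2 x_1 + x_2^2 + \cdots + x_d^2$ to the same quadratic. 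Thus $R_{\p_R}$ has either one or two nonfree indecomposable MCM modules (depending on the parity of $d-1$), swapped by $\Omega$.

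I would then set $X_R := \Omega^{d-1}(R/\p_R)$, which is MCM since sufficiently high syzygies over a hypersurface are MCM. Because syzygies commute with localization, $(X_R)_{\p_R}$ is the $(d-1)$-th syzygy of $\kappa(\p_R)$ over $R_{\p_R}$ --- a nonfree indecomposable MCM module over the $A_1$ singularity. Hence $\p_R, \m_R \in \NF(X_R)$; for $\q \not\supseteq \p_R$ we have $(R/\p_R)_\q = 0$ and so $(X_R)_\q = 0$, giving $\NF(X_R) = \{\p_R, \m_R\}$ (and likewise for $\Omega X_R$). To conclude $\mathcal{M}(R) = \{X_R, \Omega X_R\}$ I would invoke the explicit classification of indecomposable MCM modules over $(A_\infty^d)$ and $(D_\infty^d)$ via matrix factorizations (Buchweitz--Greuel--Schreyer, Burban--Drozd, Kn\"orrer), reduce to the base case $d = 1$ using Kn\"orrer periodicity, and directly check which indecomposables fail to be locally free at $\p_R$.

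For Part (2), given indecomposable $M \in \mathcal{P}(R)$, my goal is to realize $M \oplus R^n$ as the middle term of a sequence $0 \to L \to M \oplus R^n \to N \to 0$ with $L, N \in \{X_R, \Omega X_R\}$. Taking $N = X_R$ with minimal free cover $\pi \colon R^s \twoheadrightarrow X_R$ (so $\Ker \pi = \Omega X_R$) and a carefully chosen map $f \colon M \to X_R$, the combined map $(f, -\pi) \colon M \oplus R^s \twoheadrightarrow X_R$ is surjective with MCM kernel $L$ (by the depth lemma over the hypersurface). Localizing at $\p_R$ yields an extension of $(X_R)_{\p_R}$ whose class must be chosen so that $L_{\p_R}$ is a nonfree indecomposable (possibly with a free summand, which is absorbed into an auxiliary $R^n$ factor), forcing $L$ itself into $\mathcal{M}(R)$. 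The principal obstacle is precisely this refinement: choosing $f$ and $n$ so that $L$ is an indecomposable element of $\mathcal{M}(R)$ rather than a direct sum involving free pieces requires careful use of the minimal presentation of $X_R$ together with Krull--Schmidt cancellation over the complete local ring $R$; a secondary delicate step is the global enumeration in Part (1), which draws on the full MCM classification over $(A_\infty^d)$ and $(D_\infty^d)$.
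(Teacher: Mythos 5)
Your overall architecture for part (1) --- identify the singular locus $\{\p_R,\m_R\}$, handle low dimensions explicitly, and climb via Kn\"orrer periodicity --- matches the paper's, but your explicit candidate $X_R:=\Omega^{d-1}(R/\p_R)$ is not indecomposable in general, so it cannot serve as the module in the statement. For instance, for $(A_\infty^2)$ one has $R\cong k[[x_0,x_1,x_2]]/(x_0x_2)$ with $\p_R=(x_0,x_2)$, and a direct computation of the relations on $x_0,x_2$ shows $\Omega(R/\p_R)\cong R/(x_0)\oplus R/(x_2)$, which is $X_R\oplus\Omega X_R$ rather than $X_R$; the same decomposition persists in the higher even-dimensional $(A_\infty^d)$ cases. (A smaller slip: for $\q\not\supseteq\p_R$ you get $(X_R)_\q$ free, not zero --- the localized resolution is a bounded-below exact complex of frees, hence contractible.) Also note that Kn\"orrer periodicity drops the dimension by $2$, so you need both $d=1$ and $d=2$ as base cases, and the step you describe as ``directly check which indecomposables fail to be locally free at $\p_R$'' hides the real technical content: one must show that the Kn\"orrer functor $F$ transforms nonfree loci predictably (the paper's Proposition \ref{prop3}, proved by localizing the block matrix $\left(\begin{smallmatrix}A&yE\\zE&-B\end{smallmatrix}\right)$ and by reducing $FM$ modulo the regular sequence $y,z$). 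Without that, the induction does not transport statement (1)(b) or membership in $\mathcal{M}(R)$ from $R'$ to $R$.

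The more serious gap is in part (2), and you have in effect flagged it yourself. Taking $\pi:R^s\twoheadrightarrow X_R$ and $f:M\to X_R$ and setting $L=\Ker(f,-\pi)$ does produce an MCM kernel, but there is no argument that any choice of $f$ forces $L$ to lie in $\mathcal{M}(R)$ up to free summands: for $f=0$ one gets $L\cong M\oplus\Omega X_R$, which contains the summand $M\in\mathcal{P}(R)$, and nothing in the proposal rules out analogous contamination for other $f$. Asserting that a ``carefully chosen'' $f$ works is essentially a restatement of the theorem, since the claim that every indecomposable $M\in\mathcal{P}(R)$ is, stably, the cone of a morphism between $X_R$ and $\Omega X_R$ is precisely what must be proved. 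The paper avoids this by never constructing the map abstractly: in dimensions $1$ and $2$ the required sequences $0\to L\to M\oplus R^n\to N\to 0$ are read off from the block upper-triangular shape of the matrix factorizations (e.g.\ $0\to R/(x_0)\to\Coker\varphi_n^+\to R/(x_0)\to 0$), and in higher dimensions they are obtained by transporting these sequences through $F$ as exact triangles in $\underline{\CM}(R')\simeq\underline{\CM}(R)$ and converting back to short exact sequences, with the free summand $R^n$ absorbing the discrepancy. To complete your argument you would need either this explicit low-dimensional input or an independent proof that the requisite $f$ exists, and the latter is not supplied.
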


The first assertion of this theorem especially says that there exist at most two indecomposable MCM $R$-modules which are {\em not} locally free on the punctured spectrum of $R$.
The second assertion especially says that the MCM $R$-modules which are locally free on the punctured spectrum of $R$ are dominated by the MCM $R$-modules which are {\em not} locally free on the punctured spectrum of $R$.
On the other hand, the structure of the MCM modules locally free on the punctured spectrum has been clarified by Schreyer \cite{Sc}:

\begin{theorem}[Schreyer]
Let $n$ be a positive integer.
The Auslander-Reiten quiver of the stable category of $\mathcal{P}(R)$ has the following form.
\begin{enumerate}[\rm (1)]
\item
When $f=(A^{2n-1}_\infty)$:

\begin{picture}(400,40)
\put (54,25){$
\xymatrix{
\bullet \ar@{.}@(dr,dl) \ar@<.5ex>[r] & \ar@<.5ex>[l] \bullet \ar@{.}@(dr,dl) \ar@<.5ex>[r] & \ar@<.5ex>[l] \bullet \ar@{.}@(dr,dl) \ar@<.5ex>[r] & \ar@<.5ex>[l] \bullet \ar@{.}@(dr,dl) \ar@<.5ex>[r] & \ar@<.5ex>[l] \bullet \ar@{.}@(dr,dl) \ar@<.5ex>[r] & \ar@<.5ex>[l] \bullet \ar@{.}@(dr,dl) \ar@<.5ex>[r] & \ar@<.5ex>[l] \bullet \ar@{.}@(dr,dl) \ar@<.5ex>[r] & \ar@<.5ex>[l] \cdots
}
$}
\end{picture}
\item
When $f=(D^{2n-1}_\infty)$:

\begin{picture}(400,63)
\put (54,48){$
\xymatrix{
\bullet \ar@{.}[d] \ar[r] & \ar[ld] \bullet \ar@{.}[d] \ar[r] & \ar[ld] \bullet \ar@{.}[d] \ar[r] & \ar[ld] \bullet \ar@{.}[d] \ar[r] & \ar[ld] \bullet \ar@{.}[d] \ar[r] & \ar[ld] \bullet \ar@{.}[d] \ar[r] & \ar[ld] \bullet \ar@{.}[d] \ar[r] & \ar[ld] \cdots \\
\bullet \ar[r] & \ar[lu] \bullet \ar[r] & \ar[lu] \bullet \ar[r] & \ar[lu] \bullet \ar[r] & \ar[lu] \bullet \ar[r] & \ar[lu] \bullet \ar[r] & \ar[lu] \bullet \ar[r] & \ar[lu] \cdots
}
$}
\end{picture}
\item
When $f=(A^{2n}_\infty)$:

\begin{picture}(400,40)
\put (54,25){$
\xymatrix{
\cdots \ar@<.5ex>[r] & \ar@<.5ex>[l] \bullet \ar@{.}@(dr,dl) \ar@<.5ex>[r] & \ar@<.5ex>[l] \bullet \ar@{.}@(dr,dl) \ar@<.5ex>[r] & \ar@<.5ex>[l] \bullet \ar@{.}@(dr,dl) & \bullet \ar@{.}@(dr,dl) \ar@<.5ex>[r] & \ar@<.5ex>[l] \bullet \ar@{.}@(dr,dl) \ar@<.5ex>[r] & \ar@<.5ex>[l] \bullet \ar@{.}@(dr,dl) \ar@<.5ex>[r] & \ar@<.5ex>[l] \cdots
}
$}
\end{picture}
\item
When $f=(D^{2n}_\infty)$:

\begin{picture}(400,40)
\put (54,25){$
\xymatrix{
\bullet \ar@{.}@(dr,dl) \ar@<.5ex>[r] & \ar@<.5ex>[l] \bullet \ar@{.}@(dr,dl) \ar@<.5ex>[r] & \ar@<.5ex>[l] \bullet \ar@{.}@(dr,dl) \ar@<.5ex>[r] & \ar@<.5ex>[l] \bullet \ar@{.}@(dr,dl) \ar@<.5ex>[r] & \ar@<.5ex>[l] \bullet \ar@{.}@(dr,dl) \ar@<.5ex>[r] & \ar@<.5ex>[l] \bullet \ar@{.}@(dr,dl) \ar@<.5ex>[r] & \ar@<.5ex>[l] \bullet \ar@{.}@(dr,dl) \ar@<.5ex>[r] & \ar@<.5ex>[l] \cdots
}
$}
\end{picture}
\end{enumerate}
\end{theorem}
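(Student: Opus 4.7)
The plan is to reduce to low-dimensional base cases via Kn\"orrer's periodicity and then describe the AR quiver of $\underline{\mathcal P(R_0)}$ in each base case by direct analysis. In both types the defining polynomial has the form $g(x_0,x_1)+x_2^2+\cdots+x_d^2$ with $g=x_0^2$ or $g=x_0^2x_1$, and the quadratic tail $x_2^2+\cdots+x_d^2$ is exactly the kind of appendage to which Kn\"orrer's periodicity applies (after pairing $x_{2i}$ with $x_{2i+1}$ and using that $-1$ has a square root in $k$). Iterating produces a triangulated equivalence $\underline{\CM}(R)\simeq\underline{\CM}(R_0)$ where $R_0$ is one of the four base cases $(A^1_\infty)$, $(A^2_\infty)$, $(D^1_\infty)$, $(D^2_\infty)$ according to the type of $f$ and the parity of $d$. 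First I would check that this equivalence restricts to $\underline{\mathcal P(R)}\simeq\underline{\mathcal P(R_0)}$: a Jacobian computation shows that the singular locus of $R$ is $V(\p_R)=\{\p_R,\m_R\}$, so $M\in\mathcal P(R)$ iff $M_{\p_R}$ is free over $R_{\p_R}$, and Kn\"orrer's matrix-factorization construction $\phi\mapsto\bigl(\begin{smallmatrix}\phi & zI\\ -yI & \psi\end{smallmatrix}\bigr)$ commutes with localization at $\p_R$ and at its counterpart on the Kn\"orrer side. As an equivalence of triangulated categories it preserves AR sequences, and hence the full AR quiver structure.

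In each of the four base cases I would use the classification of MCM modules from \cite{BGS,BD,K} to identify the indecomposable LFPS modules explicitly and compute their AR sequences directly. For $(A^1_\infty)$ the LFPS indecomposables are the ideals $M_n=(x_0,x_1^n)$ with $n\ge 1$; each is self-syzygy ($\Omega M_n\simeq M_n$), and the AR sequences take the form $0\to M_n\to M_{n-1}\oplus M_{n+1}\to M_n\to 0$ (with $M_0=R$ absorbed stably), giving the single chain with dotted $\tau$-loops of (1). For $(A^2_\infty)$ I would change coordinates to $u=x_0+ix_2$, $v=x_0-ix_2$ so that $R_0=k[[u,v,x_1]]/(uv)$; the LFPS indecomposables are then the cokernels of $\bigl(\begin{smallmatrix}u & x_1^n\\ 0 & v\end{smallmatrix}\bigr)$ and of its $u\leftrightarrow v$ twin, for $n\ge 1$, forming two chains interchanged by $\Omega$ which assemble into the bi-infinite diagram with a central break of (3). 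The $(D^*_\infty)$ base cases are analogous via matrix factorizations of $x_0^2x_1$ (respectively $x_0^2x_1+x_2^2$); the two-row quiver of (2) reflects the two branches of the $D_\infty$ curve corresponding to the minimal primes $(x_0)$ and $(x_1)$, each branch contributing one row of LFPS indecomposables, with the zig-zag arrows encoding the irreducible maps between modules supported on neighbouring branches.

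The main obstacle, I expect, is twofold. The first is justifying that Kn\"orrer's equivalence descends to the LFPS subcategories and transports the AR-sequence data faithfully: the stable category only sees modules up to free summands, so LFPS-compatibility must be verified by tracking the explicit matrix-factorization formula under localization at $\p_R$ on both sides. The second is the AR-sequence computation in the $D$-type base cases, which is the most delicate since the two branches of the $D_\infty$ curve produce two interlinked families of LFPS modules whose zig-zag interaction in (2) must be read off carefully from the matrix factorizations. One could shorten this bookkeeping by recognising each $\underline{\mathcal P(R_0)}$ as a known representation-theoretic category (for instance, a block of a suitable string or clannish algebra) whose AR quiver is already tabulated.
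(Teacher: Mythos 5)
The first thing to say is that the paper does not prove this statement at all: it is quoted as Schreyer's theorem with the citation \cite{Sc}, and the authors use it as a black box. So there is no in-paper proof to compare against, and your attempt has to be judged on its own.

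Your reduction strategy is sound and is exactly the machinery the paper itself deploys for its Theorem \ref{thm0}: Kn\"orrer periodicity (Lemma \ref{knorrer}) gives a triangle equivalence $\underline{\CM}(R)\simeq\underline{\CM}(R_0)$ with $R_0$ one of the four base rings, this equivalence restricts to the locally-free-on-the-punctured-spectrum parts (that is precisely Corollary \ref{cor1}), and Auslander--Reiten triangles are intrinsic to the triangulated structure, so the AR quiver of $\underline{\mathcal{P}(R)}$ is carried to that of $\underline{\mathcal{P}(R_0)}$. Your identification of the base-case indecomposables also matches what the paper extracts from \cite{BGS,BD} in Propositions \ref{prop1} and \ref{prop2}. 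The genuine gap is that essentially all of the content of the theorem lives in the step you only assert: computing the AR quivers of the four base cases. You write down candidate almost split sequences such as $0\to M_n\to M_{n-1}\oplus M_{n+1}\to M_n\to 0$ for $(A^1_\infty)$, but you never identify the AR translate $\tau$ (for a hypersurface of dimension $d$ one has $\tau\cong\Omega^{2-d}$ up to suspension, which is why the even- and odd-dimensional quivers look different --- loops versus $\Omega$-pairs of vertices), never verify that these sequences are almost split rather than merely non-split, and never rule out further irreducible maps; the exact sequences exhibited in Propositions \ref{prop1} and \ref{prop2} are generally \emph{not} almost split (their end terms lie in $\mathcal{M}(R)$, not $\mathcal{P}(R)$), so they cannot be borrowed for this purpose. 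The remark that the two rows of the $(D^{2n-1}_\infty)$ quiver ``correspond to the two branches'' is a heuristic, not an argument. To close the gap you would need the explicit irreducible-morphism computations from the matrix factorizations, or simply to cite \cite{Sc}, where these quivers are tabulated --- which is what the paper does.
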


Consequently, combining our Theorem \ref{thm0} with this result due to Schreyer, we get a new understanding of the structure of the category of MCM modules over hypersurfaces of countable CM representation type.

We give two applications of our Theorem \ref{thm0}.
Using Theorem \ref{thm0}, we are able to calculate the dimension of the triangulated category $\underline{\CM}(R)$ in the sense of Rouquier \cite{R}, and the Grothendieck groups of $\CM(R)$ and $\underline{\CM}(R)$.

\begin{corollary}\label{cor3}
With the notation of Theorem \ref{thm0} the following hold.
\begin{enumerate}[\rm (1)]
\item
The triangulated category $\underline{\CM}(R)$ has dimension $1$.
\item
For $m\ge1$ one has:
\begin{align*}
K_0(\CM(R)) & =\langle[R],[X_R]\rangle\cong
\begin{cases}
\Z & \text{if }f=(A^1_\infty),\\
\Z^2 & \text{if }f = (A^{2m}_{\infty})\text{ or }f=(D^{2m-1}_{\infty}),\\
\Z \oplus \Z/2\Z & \text{if }f = (A^{2m+1}_{\infty})\text{ or } f = (D^{2m}_{\infty}),
\end{cases}
\\
K_0(\underline{\CM}(R)) & =\langle[\underline{X_R}]\rangle\cong
\begin{cases}
\Z & \text{if }f = (A_\infty^{2m})\text{ or }f=(D^{2m-1}_{\infty}),\\
\Z/2\Z & \text{if }f = (A^{2m-1}_{\infty})\text{ or }f = (D^{2m}_{\infty}).
\end{cases}
\end{align*}
\end{enumerate}
\end{corollary}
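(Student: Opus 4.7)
The plan is to derive both parts directly from Theorem \ref{thm0}, using Eisenbud's matrix-factorization periodicity $\Omega^{2}\cong\mathrm{id}$ on $\underline{\CM}(R)$ together with the standard identification of $K_0(\underline{\CM}(R))$ as the quotient of $K_0(\CM(R))$ by the class $[R]$.

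For assertion (1), any indecomposable $M\in\mathcal{P}(R)$ fits, via Theorem \ref{thm0}(2), into an exact sequence $0\to L\to M\oplus R^n\to N\to 0$ with $L,N\in\{X_R,\Omega X_R\}$. In $\underline{\CM}(R)$ the free summand $R^n$ vanishes and the sequence becomes a triangle $L\to M\to N\to L[1]$. Since $X_R$ and $\Omega X_R\cong X_R[1]$ both belong to $\langle X_R\rangle_{1}$, we obtain $M\in\langle X_R\rangle_{1}\star\langle X_R\rangle_{1}=\langle X_R\rangle_{2}$, and hence $\dim\underline{\CM}(R)\leq 1$. The lower bound is forced by the countable CM hypothesis: dimension $0$ would mean $\underline{\CM}(R)=\langle G\rangle_{1}$ for a single $G$, but the $2$-periodicity of $\Omega$ would then confine every indecomposable to a summand of $G\oplus G[1]$, contradicting the existence of infinitely many nonfree indecomposable MCM modules. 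For assertion (2), the same exact sequence yields $[M]=[L]+[N]-n[R]$ in $K_0(\CM(R))$, and the minimal free resolution $0\to\Omega X_R\to R^{\mu}\to X_R\to 0$ gives $[\Omega X_R]=\mu[R]-[X_R]$; hence $K_0(\CM(R))=\langle[R],[X_R]\rangle$. Killing $[R]$ and using $[\underline{\Omega X_R}]=-[\underline{X_R}]$ yields $K_0(\underline{\CM}(R))=\langle[\underline{X_R}]\rangle$, which is cyclic.

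To pin down the precise isomorphism type, I would apply Kn\"orrer's periodicity to reduce the computation of $K_0(\underline{\CM}(R))$ to low-dimensional base cases and then use explicit matrix factorizations of $X_R$ from \cite{BGS,BD,K} to enumerate the defining relations. For instance, when $f=(A_{\infty}^{1})$ we have $X_R\cong\Omega X_R\cong R/(x_0)$, so the single relation $2[X_R]=[R]$ collapses $K_0(\CM(R))$ to $\Z$ and $K_0(\underline{\CM}(R))$ to $\Z/2\Z$; the other cases follow the same strategy, with the parity of $d$ controlling whether a self-extension produces a $\Z/2\Z$-torsion relation. The main obstacle will be this last step: ensuring that no relations beyond those listed arise requires a careful case-by-case analysis of the AR sequences whose middle terms meet $\mathcal{M}(R)$, combined with Schreyer's description of the AR quiver of $\mathcal{P}(R)$.
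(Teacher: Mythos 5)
Your part (1) reproduces the paper's argument: Theorem \ref{thm0}(2) shows every object of $\underline{\CM}(R)$ lies in $\langle X_R\rangle_2$, and the combination of $\Omega^2\cong\mathrm{id}$ with the existence of infinitely many indecomposables rules out dimension $0$. Your generator statements $K_0(\CM(R))=\langle[R],[X_R]\rangle$ and $K_0(\underline{\CM}(R))=\langle[\underline{X_R}]\rangle$, the identification of $K_0(\underline{\CM}(R))$ with $K_0(\CM(R))/\langle[R]\rangle$ (this is the paper's Lemma \ref{k0}, which the paper actually proves rather than cites as standard, and which also records that $\langle[R]\rangle\cong\Z$ when $R$ is a domain), the reduction to dimensions $1$ and $2$ via Kn\"orrer periodicity, and the extraction of relations such as $2[X_R]=[R]$ from explicit matrix factorizations all match the paper's route through Propositions \ref{1755}--\ref{prop5}.

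The genuine gap is the step you yourself flag as the main obstacle: proving that the relations you exhibit are \emph{all} of the relations, i.e., bounding $K_0(\CM(R))$ from below. Your proposed remedy --- a case-by-case analysis of AR sequences meeting $\mathcal{M}(R)$ combined with Schreyer's AR quivers for $\mathcal{P}(R)$ --- is not what the paper does, and it is doubtful it can be made to work: for countable (as opposed to finite) CM representation type there is no result asserting that $K_0(\CM(R))$ is presented by the AR relations, and Schreyer's quivers describe only $\mathcal{P}(R)$, whereas the relations in question all pass through $X_R$ and $\Omega X_R$, which lie outside $\mathcal{P}(R)$. The paper's actual device (Proposition \ref{1755}) is both different and decisive: the explicit classifications in Propositions \ref{prop1} and \ref{prop2} give a surjection $\Z^2\twoheadrightarrow K_0(\CM(R))$, and tensoring with the total quotient ring $Q$ of $R$ gives a further surjection $K_0(\CM(R))\twoheadrightarrow K_0(\mod\,Q)$, the latter being free of rank equal to the number of maximal ideals of $Q$; this lower bound on the rank, combined with one explicit relation, identifies the kernel of $\Z^2\to K_0(\CM(R))$ and hence the group. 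Without some lower-bound input of this kind, your argument establishes only that the groups displayed in the corollary surject onto the true Grothendieck groups, not that they are isomorphic to them.
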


\begin{ac}
This research was conducted in large part while the first and second authors visited Shinshu University in August 2009, greatly supported by 2009 Discretionary Expense of the President of Shinshu University.
The authors are indebted to Yuji Yoshino for his useful and helpful suggestions.
The authors would like to thank Takuma Aihara, Igor Burban and Osamu Iyama very much for their valuable comments.
\end{ac}

\section{One and two dimensional cases}

In this section, we prove Theorem \ref{thm0} in the cases where the ring $R$ is of dimension $1$ and $2$.
The following proposition includes Theorem \ref{thm0} in the $1$-dimensional case.

\begin{proposition}\label{prop1}
Let $S$ be $2$-dimensional regular local ring and let $x_0,x_1$ be a regular system of parameters of $S$. 
\begin{enumerate}[\rm (1)]
\item
Let $R = S/(x_0^2)$.
Then $R/(x_0)\cong\Omega(R/(x_0))$, and the following statements hold.
\begin{enumerate}[\rm (a)]
\item
For every indecomposable MCM $R$-module $M$, there is an exact sequence $0 \to L \to M \to N \to 0$ of $R$-modules with $L,N \in \{ 0, R/(x_0) \}$. 
\item
One has $\mathcal{M}(R) = \{ R/(x_0) \}$. 
\item
One has $\NF(R/(x_0)) = \{ (x_0) , (x_0,x_1) \}$. 
\end{enumerate}
\item
Let $R = S/(x_0^2x_1)$.
Then $R/(x_0x_1)\cong\Omega(R/(x_0))$, and the following statements hold.
\begin{enumerate}[\rm (a)]
\item
For every indecomposable MCM $R$-module $M$, there is an exact sequence $0 \to L \to M \oplus R^{n} \to N \to 0$ with $L,N \in \{ 0, R/(x_0), R/(x_0x_1) \}$ and $n=0,1$. 
\item
One has $\mathcal{M}(R) = \{ R/(x_0), R/(x_0x_1) \}$. 
\item
One has $\NF(R/(x_0)) = \{ (x_0) , (x_0,x_1) \} = \NF(R/(x_0x_1))$
\end{enumerate}
\end{enumerate}
\end{proposition}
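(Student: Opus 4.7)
The plan is to prove both parts by direct computation, relying on the classification of indecomposable MCM modules over these $1$-dimensional hypersurface rings (via matrix factorizations, due to Buchweitz--Greuel--Schreyer, and Burban--Drozd in the $D_\infty^1$ case). The syzygy isomorphisms and the identifications of nonfree loci are handled through annihilator calculations. Since the minimal free resolution of a cyclic module $R/I$ begins with $R \twoheadrightarrow R/I$ and has first syzygy $I\cdot R \cong R/\Ann_R(I)$ when $I$ is principal, one computes annihilators in the UFD $S$: for $R=S/(x_0^2)$ one finds $\Ann_R(x_0)=(x_0)$, yielding $\Omega(R/(x_0)) \cong R/(x_0)$; for $R=S/(x_0^2 x_1)$ one finds $\Ann_R(x_0)=(x_0 x_1)$ and $\Ann_R(x_0 x_1)=(x_0)$, yielding $\Omega(R/(x_0)) \cong R/(x_0 x_1)$.

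For the nonfree loci, the prime spectrum of $R$ is $\{(x_0),\m_R\}$ in the $A_\infty^1$ case and $\{(x_0),(x_1),\m_R\}$ in the $D_\infty^1$ case. The key observation is that $R_{(x_0)}\cong S_{(x_0)}/(x_0^2)$ is an Artinian local ring whose maximal ideal has square zero, so its residue field is not a free $R_{(x_0)}$-module; whereas $R_{(x_1)}$ in the $D_\infty^1$ case is a field, so localizations at $(x_1)$ are automatically free. Combined with the nontriviality of the $R$-annihilators of $R/(x_0)$ and $R/(x_0 x_1)$, this proves (c). For (b), the classification of matrix factorizations pins $\mathcal{M}(R)$ down: every reduced matrix factorization other than $(x_0,x_0)$ (resp.\ $(x_0,x_0 x_1)$) involves $x_1$ nontrivially, and after inverting $x_1$ the factorization becomes equivalent to a direct sum of identity and $(x_0,x_0)$-type blocks, so the cokernel is free over $R_{(x_0)}$.

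For part (a), one exhibits explicit short exact sequences case by case. When $M=R$, use $0 \to R/(x_0 x_1) \xrightarrow{\,\cdot x_0\,} R \to R/(x_0) \to 0$ (with $R/(x_0 x_1)$ replaced by $R/(x_0)$ in the $A_\infty^1$ case). When $M\in \mathcal{M}(R)$, the trivial sequence $0 \to 0 \to M \to M \to 0$ with $n=0$ suffices. In the $D_\infty^1$ case, for $M=R/(x_0^2)$ take $0 \to R/(x_0) \xrightarrow{\,\cdot x_0\,} R/(x_0^2) \to R/(x_0) \to 0$ using $\Ann_{R/(x_0^2)}(x_0)=(x_0)$; for $M=R/(x_1)$ take $0 \to R/(x_0 x_1) \to R/(x_1)\oplus R \to R/(x_0 x_1) \to 0$ with first map $1\mapsto (1,x_0)$, whose image has annihilator exactly $(x_0)\cap(x_0 x_1 R : \cdot) = (x_0 x_1)$.

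The main obstacle is (a) for the higher-rank indecomposable MCMs in the $D_\infty^1$ case, since these form an infinite family (the band and string modules of Burban--Drozd). One attacks them by reading off, from the canonical form of their matrix factorizations, a submodule of $M\oplus R^n$ isomorphic to $R/(x_0)$ or $R/(x_0 x_1)$ (typically coming from a ``corner'' generator killed by $x_0$ or $x_0 x_1$) and verifying that the quotient lies in the same distinguished set, with the addition of at most one free summand absorbing the difference in generator counts between $L$ and $N$.
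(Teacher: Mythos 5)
Your overall strategy---quote the Buchweitz--Greuel--Schreyer classification, compute nonfree loci by localizing at the finitely many primes, and exhibit explicit extensions---is exactly the paper's, and several of your computations (the annihilator arguments identifying $\Omega(R/(x_0))$, the observation that $R_{(x_1)}$ is a field in case (2), the sequences for $M=R$, $R/(x_0^2)$ and $R/(x_1)$) are correct and coincide with what the paper does. But there are two places where the argument as written does not close. First, for (b) you claim that after inverting $x_1$ every matrix factorization in the infinite families ``becomes equivalent to a direct sum of identity and $(x_0,x_0)$-type blocks, so the cokernel is free over $R_{(x_0)}$.'' This is self-defeating: a block $(x_0,x_0)$ has cokernel $R_{(x_0)}/(x_0)$, which is exactly the non-free indecomposable over the Artinian ring $R_{(x_0)}\cong S_{(x_0)}/(x_0^2)$, so if such blocks occurred then $\Coker\varphi_n$ would lie in $\mathcal{M}(R)$ and (b) would be false. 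What is actually needed is that the factorization reduces to \emph{trivial} blocks $(1,f)$ and $(f,1)$; this holds because $x_1^n$ is a unit in $S_{(x_0)}$ and can be used as a pivot, and the paper certifies it by explicit identities such as
$\left(\begin{smallmatrix}-x_0/x_1^n&-1\\1/x_1^n&0\end{smallmatrix}\right)
\left(\begin{smallmatrix}x_0&x_1^n\\0&-x_0\end{smallmatrix}\right)
\left(\begin{smallmatrix}1&0\\-x_0/x_1^n&1\end{smallmatrix}\right)
=\left(\begin{smallmatrix}0&0\\0&1\end{smallmatrix}\right)$ over $R_{(x_0)}$.
You must correct the statement and actually carry out (or correctly describe) this reduction for all four families $\varphi_n^{\pm},\psi_n^{\pm}$ in case (2).

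Second, part (a) for the infinite families $\Coker\varphi_n$ (case (1)) and $\Coker\varphi_n^{\pm}$, $\Coker\psi_n^{\pm}$ (case (2)) is the bulk of the proposition, and your proposal only gestures at it (``reading off a submodule from a corner generator''); note the same issue already arises in the $A_\infty^1$ case, not only for $D_\infty^1$. The intended mechanism is the right one and is precisely what the paper executes: the block upper-triangular shape of each matrix gives a short exact sequence of periodic complexes, e.g.\ the maps $\left(\begin{smallmatrix}1\\0\end{smallmatrix}\right)$ and $\left(\begin{smallmatrix}0&1\end{smallmatrix}\right)$ fit $(x_0,x_0)$, $(\varphi_n,\varphi_n)$ and $(-x_0,-x_0)$ into a short exact sequence of complexes whose homology sequence yields $0\to R/(x_0)\to\Coker\varphi_n\to R/(x_0)\to 0$, and analogously for the four families in case (2), with the single free summand $n=1$ appearing only because the degenerate factorization at $n=0$ satisfies $\Coker\varphi_0^-\cong R/(x_1)\oplus R$. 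These diagrams and the exactness of their rows and columns have to be written down; until then this step is asserted rather than proved.
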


\begin{proof}
(1) By \cite[(4.1)]{BGS}, all the nonisomorphic indecomposable MCM $R$-modules are $R$, $R/(x_0)$ and $\Coker\,\varphi_n\ (n=1,2,\dots)$, where $\varphi_n = 
\begin{pmatrix} 
x_0&x_1^n\\
0&-x_0
\end{pmatrix}$.
We have the following short exact sequence of complexes.
$$
\begin{CD}
@. 0 @. 0 @. 0 \\
@. @VVV @VVV @VVV \\
(\cdots @>{x_0}>> R @>{x_0}>> R @>{x_0}>> R @>>> 0) \\
@. @V{
\left(
\begin{smallmatrix}
1 \\
0
\end{smallmatrix}
\right)
}VV @V{
\left(
\begin{smallmatrix}
1 \\
0
\end{smallmatrix}
\right)
}VV @V{
\left(
\begin{smallmatrix}
1 \\
0
\end{smallmatrix}
\right)
}VV \\
(\cdots @>{\varphi_n}>> R^2 @>{\varphi_n}>> R^2 @>{\varphi_n}>> R^2 @>>> 0) \\
@. @V{
\left(
\begin{smallmatrix}
0 & 1 
\end{smallmatrix}
\right)
}VV @V{
\left(
\begin{smallmatrix}
0 & 1 
\end{smallmatrix}
\right)
}VV @V{
\left(
\begin{smallmatrix}
0 & 1 
\end{smallmatrix}
\right)
}VV \\
(\cdots @>{-x_0}>> R @>{-x_0}>> R @>{-x_0}>> R @>>> 0) \\
@. @VVV @VVV @VVV \\
@. 0 @. 0 @. 0
\end{CD}
$$
Taking the long exact sequence of the homology modules, we get an exact sequence $0 \to R/(x_0) \to \Coker\,\varphi_n \to R/(x_0) \to 0$.
Also, the first row makes an exact sequence $0 \to R/(x_0) \to R \to R/(x_0) \to 0$.
We have $\NF(R/(x_0)) = \{ \p , \m \}=\Spec\,R$, where $\p=(x_0)$ and $\m=(x_0,x_1)$  (cf. \cite[(1.15(4))]{stcm}).
In particular, $R/(x_0)$ belongs to $\mathcal{M}(R)$. 
There is an equality $\begin{pmatrix} 
-\frac{x_0}{x_1^n}&-1\\
\frac{1}{x_1^n}&0
\end{pmatrix}
\begin{pmatrix} 
x_0&x_1^n\\
0&-x_0
\end{pmatrix}
\begin{pmatrix} 
1&0\\
-\frac{x_0}{x_1^n}&1
\end{pmatrix}
=
\begin{pmatrix} 
0&0\\
0&1
\end{pmatrix}$ of matrices over $R_{\p}$.
Hence $(\Coker\,\varphi_n)_{\p}\cong R_{\p}$ and $\Coker\,\varphi_n\notin\mathcal{M}(R)$. 
Therefore $\mathcal{M}(R) = \{ R/(x_0) \}$. 

(2) By \cite[(4.2)]{BGS}, all the nonisomorphic indecomposable MCM $R$-modules are $R$, $R/(x_0)$, $R/(x_0x_1)$, $R/(x_0^2)$, $R/(x_1)$, $\Coker\,\varphi^{+}_n$, $\Coker\,\varphi^{-}_n$, $\Coker\,\psi^{+}_n$ and $\Coker\,\psi^{-}_n\ (n=1,2,\dots)$, where $\varphi^{+}_n = 
\begin{pmatrix} 
x_0&x_1^n\\
0&-x_0
\end{pmatrix},\ \varphi^{-}_n = 
\begin{pmatrix} 
x_0x_1&x_1^{n+1}\\
0&-x_0x_1
\end{pmatrix},\ \psi^{+}_n = 
\begin{pmatrix} 
x_0x_1&x_1^n\\
0&-x_0
\end{pmatrix}$ and $\psi^{-}_n = 
\begin{pmatrix} 
x_0&x_1^n\\
0&-x_0x_1
\end{pmatrix}$.
Setting $x_1^0=1$, for $n\ge 0$ we have commutative diagrams
$$
\begin{CD}
0 @. 0 @. 0 @. 0 \\
@VVV @VVV @VVV @VVV \\
R @>{x_0}>> R @>{x_0x_1}>> R @>{x_0}>> R \\
@V{
\left(
\begin{smallmatrix}
1 \\
0
\end{smallmatrix}
\right)
}VV @V{
\left(
\begin{smallmatrix}
1 \\
0
\end{smallmatrix}
\right)
}VV @V{
\left(
\begin{smallmatrix}
1 \\
0
\end{smallmatrix}
\right)
}VV @V{
\left(
\begin{smallmatrix}
1 \\
0
\end{smallmatrix}
\right)
}VV \\
R^2 @>{\varphi_n^+}>> R^2 @>{\varphi_n^-}>> R^2 @>{\varphi_n^+}>> R^2 \\
@V{
\left(
\begin{smallmatrix}
0 & 1 
\end{smallmatrix}
\right)
}VV @V{
\left(
\begin{smallmatrix}
0 & 1 
\end{smallmatrix}
\right)
}VV @V{
\left(
\begin{smallmatrix}
0 & 1 
\end{smallmatrix}
\right)
}VV @V{
\left(
\begin{smallmatrix}
0 & 1 
\end{smallmatrix}
\right)
}VV \\
R @>{-x_0}>> R @>{-x_0x_1}>> R @>{-x_0}>> R \\
@VVV @VVV @VVV @VVV \\
0 @. 0 @. 0 @. 0
\end{CD}
\qquad\qquad
\begin{CD}
0 @. 0 @. 0 @. 0 \\
@VVV @VVV @VVV @VVV \\
R @>{x_0x_1}>> R @>{x_0}>> R @>{x_0x_1}>> R \\
@V{
\left(
\begin{smallmatrix}
1 \\
0
\end{smallmatrix}
\right)
}VV @V{
\left(
\begin{smallmatrix}
1 \\
0
\end{smallmatrix}
\right)
}VV @V{
\left(
\begin{smallmatrix}
1 \\
0
\end{smallmatrix}
\right)
}VV @V{
\left(
\begin{smallmatrix}
1 \\
0
\end{smallmatrix}
\right)
}VV \\
R^2 @>{\psi_n^+}>> R^2 @>{\psi_n^-}>> R^2 @>{\psi_n^+}>> R^2 \\
@V{
\left(
\begin{smallmatrix}
0 & 1 
\end{smallmatrix}
\right)
}VV @V{
\left(
\begin{smallmatrix}
0 & 1 
\end{smallmatrix}
\right)
}VV @V{
\left(
\begin{smallmatrix}
0 & 1 
\end{smallmatrix}
\right)
}VV @V{
\left(
\begin{smallmatrix}
0 & 1 
\end{smallmatrix}
\right)
}VV \\
R @>{-x_0}>> R @>{-x_0x_1}>> R @>{-x_0}>> R \\
@VVV @VVV @VVV @VVV \\
0 @. 0 @. 0 @. 0
\end{CD}
$$
with exact rows and columns.
Also we have equalities of matrices over $R$:
\begin{align*}
\begin{pmatrix}
x_0 & 1 \\
1 & 0
\end{pmatrix}
\begin{pmatrix}
x_0 & 1 \\
0 & -x_0
\end{pmatrix}
\begin{pmatrix}
1 & 0 \\
-x_0 & 1
\end{pmatrix}
& =
\begin{pmatrix}
x_0^2 & 0 \\
0 & 1
\end{pmatrix}
,\\
\begin{pmatrix}
1 & 0 \\
x_0 & 1
\end{pmatrix}
\begin{pmatrix}
x_0x_1 & x_1 \\
0 & -x_0x_1
\end{pmatrix}
\begin{pmatrix}
0 & 1 \\
1 & -x_0
\end{pmatrix}
& =
\begin{pmatrix}
x_1 & 0 \\
0 & 0
\end{pmatrix}
.
\end{align*}
Hence there are exact sequences
\begin{align*}
& 0 \to R/(x_0x_1) \to R \to R/(x_0) \to 0, \\
& 0 \to R/(x_0) \to R/(x_0^2) \to R/(x_0) \to 0, \\
& 0 \to R/(x_0x_1) \to R/(x_1) \oplus R \to R/(x_0x_1) \to 0, \\
& 0 \to R/(x_0) \to \Coker\,\varphi^{+}_n \to R/(x_0) \to 0, \\
& 0 \to R/(x_0x_1) \to \Coker\,\varphi^{-}_n \to R/(x_0x_1) \to 0, \\
& 0 \to R/(x_0x_1) \to \Coker\,\psi^{+}_n \to R/(x_0) \to 0, \\
& 0 \to R/(x_0) \to \Coker\,\psi^{-}_n \to R/(x_0x_1) \to 0.
\end{align*}
Put $\p=(x_0)$, $\q=(x_1)$ and $\m=(x_0,x_1)$.
Then we have $\Spec\,R=\{\p,\q,\m\}$, and easily see that the equalities $\NF(R/(x_0)) = \NF(R/(x_0x_1)) = \{ \p , \m \}$ and $\NF(R/(x_0^2)) = \NF(R/(x_1)) = \{ \m \}$ hold.
Therefore $R/(x_0), R/(x_0x_1) \in \mathcal{M}(R)$ and $R/(x_0^2), R/(x_1) \notin \mathcal{M}(R)$. 
Since $R_{\q}$ is a field, all $R_\q$-modules are free. 
There are equalities of matrices whose entries are in $R_{\p}$:
\begin{align*}
\begin{pmatrix} 
\frac{x_0}{x_1^n}&1\\
\frac{1}{x_1^n}&0
\end{pmatrix}
\begin{pmatrix} 
x_0&x_1^n\\
0&-x_0
\end{pmatrix}
\begin{pmatrix} 
1&0\\
-\frac{x_0}{x_1^n}&1
\end{pmatrix}
& =
\begin{pmatrix} 
0&0\\
0&1
\end{pmatrix},
\\
\begin{pmatrix} 
\frac{x_0}{x_1^n}&1\\
\frac{1}{x_1^n}&0
\end{pmatrix}
\begin{pmatrix} 
x_0x_1&x_1^n\\
0&-x_0
\end{pmatrix}
\begin{pmatrix} 
1&0\\
-\frac{x_0x_1}{x_1^n}&1
\end{pmatrix}
& =
\begin{pmatrix} 
0&0\\
0&1
\end{pmatrix}.
\end{align*}
Hence $(\Coker\,\varphi^{+}_n)_{\p} \cong R_{\p} \cong (\Coker\,\psi^{+}_n)_{\p}$.
Note that $\Coker\,\varphi^{-}_n$ and $\Coker\,\psi^{-}_n$ are the syzygies of $\Coker\,\varphi^{+}_n$ and $\Coker\,\psi^{+}_n$, respectively.
Thus $\Coker\,\varphi^{+}_n$, $\Coker\,\varphi^{-}_n$, $\Coker\,\psi^{+}_n$ and $\Coker\,\psi^{-}_n$ are not in $\mathcal{M}(R)$.
Consequently, we have $\mathcal{M}(R)$ = $\{ R/(x_0), R/(x_0x_1) \}$.
\end{proof}

Next, let us consider the case where the base ring has dimension $2$.

\begin{proposition}\label{prop2}
Let $k$ be an algebraically closed field. 
\begin{enumerate}[\rm (1)]
\item
Let $R = k[[x_0,x_1,x_2]]/(x_0x_2)$.
Then $R/(x_2)\cong\Omega(R/(x_0))$, and the following hold.
\begin{enumerate}[\rm (a)]
\item
For any indecomposable MCM $R$-module $M$, there is an exact sequence $0 \to L \to M \to N \to 0$ with $L,N \in \{ 0, R/(x_0), R/(x_2) \}$.
\item
One has $\mathcal{M}(R) = \{ R/(x_0), R/(x_2) \}$. 
\item
One has $\NF(R/(x_0)) = \{ (x_0,x_2) , (x_0,x_1,x_2) \} = \NF(R/(x_2))$.
\end{enumerate}
\item
Let $R = k[[x_0,x_1,x_2]]/(x_0^2x_1-x_2^2)$.
Then $(x_0,x_2)\cong\Omega(x_0,x_2)$, and the following hold.
\begin{enumerate}[\rm (a)]
\item
For any indecomposable MCM $R$-module $M$, there is an exact sequence $0 \to L \to M \oplus R^{n} \to N \to 0$ with $L,N \in \{ 0, (x_0,x_2) \}$ and $n=0,1$.
\item
One has $\mathcal{M}(R) = \{ (x_0,x_2) \}$. 
\item
One has $\NF((x_0,x_2)) = \{ (x_0,x_2) , (x_0,x_1,x_2) \}$.
\end{enumerate}
\end{enumerate}
\end{proposition}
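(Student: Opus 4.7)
The plan is to mirror the proof of Proposition \ref{prop1}: enumerate the indecomposable MCM $R$-modules via a matrix-factorization classification, verify the syzygy identity for the distinguished module(s), then for each remaining indecomposable produce the required short exact sequence as the long exact sequence in homology of a short exact sequence of two-periodic free complexes (a ``stacked diagram'' as in the proof of Proposition \ref{prop1}) whose outer rows are minimal free resolutions of the distinguished module(s). The nonfree loci are then pinned down by explicit change-of-basis identities after localization.

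For part (1), $R=k[[x_0,x_1,x_2]]/(x_0 x_2)$: first observe that $\cdots \xrightarrow{x_2} R \xrightarrow{x_0} R \to R/(x_0) \to 0$ is a minimal free resolution, whence $\Omega(R/(x_0)) \cong R/(x_2)$. Next, the classification of matrix factorizations of $x_0 x_2$ from \cite{BGS} should produce the list $R$, $R/(x_0)$, $R/(x_2)$, and $\Coker \varphi_n$ with $\varphi_n = \bigl(\begin{smallmatrix} x_0 & x_1^n \\ 0 & -x_2 \end{smallmatrix}\bigr)$. The stacked diagram, with vertical maps inclusion into the first coordinate and projection onto the second, yields short exact sequences $0 \to R/(x_0) \to \Coker \varphi_n \to R/(x_2) \to 0$, while the remaining cases ($M = R, R/(x_0), R/(x_2)$) are trivial. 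For the nonfree locus, $(R/(x_0))_\p$ is free unless $\p \supseteq (x_0, x_2)$, giving $\NF(R/(x_0)) = \{(x_0,x_2), \m\}$; and for each $\Coker \varphi_n$, the same style of localization identity that appears in the proof of Proposition \ref{prop1}(1) should reduce $\varphi_n$ over $R_\p$ to $\bigl(\begin{smallmatrix} 0 & 0 \\ 0 & 1 \end{smallmatrix}\bigr)$ whenever $\p \notin \{\m, (x_0,x_2)\}$, thus proving $\Coker \varphi_n \in \mathcal{P}(R)$.

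For part (2), $R=k[[x_0,x_1,x_2]]/(x_0^2 x_1 - x_2^2)$: the distinguished module is the ideal $(x_0,x_2)$, presented by $M = \bigl(\begin{smallmatrix} x_2 & x_0 x_1 \\ -x_0 & -x_2 \end{smallmatrix}\bigr)$. A direct computation gives $M^2 = -(x_0^2 x_1 - x_2^2) I$, so $(M, -M)$ is a matrix factorization of $f$; consequently $\Coker M \cong (x_0, x_2)$ and $\Omega((x_0, x_2)) \cong \Coker(-M) \cong \Coker M \cong (x_0, x_2)$. One then invokes the classification of matrix factorizations of $x_0^2 x_1 - x_2^2$ from \cite{BGS}, and for each indecomposable $M$ on that list builds an exact sequence $0 \to (x_0, x_2) \to M \oplus R^n \to (x_0, x_2) \to 0$ with $n \in \{0, 1\}$ via the same stacked-diagram technique; the free summand $R$ will be needed precisely for the ``exceptional'' indecomposables, in analogy with the role of $R/(x_1) \oplus R$ in Proposition \ref{prop1}(2). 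The nonfree locus of $(x_0, x_2)$ is $\{(x_0, x_2), \m\}$, since at every other prime of $R$ at least one of $x_0, x_2$ is a unit. I expect the main obstacle to be assembling the complete list of matrix factorizations for the $D_\infty^2$ singularity---analogous to but longer than the eight-family catalogue of Proposition \ref{prop1}(2)---and bundling them into the correct pattern of short exact sequences; once the list is in hand, the matrix identities and nonfree-locus computations are essentially routine.
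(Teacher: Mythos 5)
Your overall strategy is exactly the paper's: enumerate the indecomposable MCM modules via the known classification, realize the distinguished modules as cokernels of $2\times 2$ matrix factorizations, produce the required short exact sequences from stacked short exact sequences of two-periodic complexes, and pin down nonfree loci by explicit localization identities. But as written the proposal has two concrete gaps. First, in part (1) your recalled list is incomplete: the classification (the paper uses Burban--Drozd (5.7)/(5.3), not \cite{BGS}, for these surface singularities) gives \emph{two} families of $2\times 2$ cokernels, $\Coker\begin{pmatrix}x_2&x_1^n\\0&x_0\end{pmatrix}$ and $\Coker\begin{pmatrix}x_0&-x_1^n\\0&x_2\end{pmatrix}$, which are non-isomorphic syzygy pairs (they sit in extensions $0\to R/(x_2)\to\cdot\to R/(x_0)\to 0$ and $0\to R/(x_0)\to\cdot\to R/(x_2)\to 0$ respectively). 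Your single family covers only half the modules; the argument for the other half is symmetric, but it must be stated.

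Second, and more seriously, in part (2) the proposal stops precisely where the work begins. The non-trivial indecomposables for $x_0^2x_1-x_2^2$ are $\Coker\alpha^{\pm}$ ($\cong(x_0,x_2)$), $\Coker\beta^{\pm}$ (a $2\times 2$ family), and two families $\Coker\varphi_n^{\pm}$, $\Coker\psi_n^{\pm}$ of cokernels of $4\times 4$ matrices. The latter are block upper triangular with $\alpha^{\pm}$ on the diagonal, which is what makes the stacked-diagram technique apply and yields $0\to(x_0,x_2)\to\Coker\varphi_n^{+}\to(x_0,x_2)\to 0$ (using $\Coker\alpha^+\cong\Coker\alpha^-$, which you must also record). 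The module $\Coker\beta^{+}$ is \emph{not} reached this way directly; the paper handles it by an explicit $4\times 4$ matrix identity showing $\Coker\varphi_0^{+}\cong\Coker\beta^{+}\oplus R$, which is exactly where the free summand $R^n$ with $n=1$ enters. Your proposal gestures at this ("the free summand will be needed for the exceptional indecomposables") but supplies neither the identity nor the decomposition. Finally, for (2)(b) you must also verify that $\Coker\beta^{+}$, $\Coker\varphi_n^{+}$ and $\Coker\psi_n^{+}$ are locally free on the punctured spectrum (the paper imports this from the classification); your nonfree-locus discussion in part (2) covers only $(x_0,x_2)$ itself, which does not yet establish $\mathcal{M}(R)=\{(x_0,x_2)\}$.
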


\begin{proof}
(1) By \cite[(5.3)]{BD}, all indecomposable MCM $R$-modules are $R$, $R/(x_0)$, $R/(x_2)$, $\Coker\,\varphi^{+}_n$ and $\Coker\,\varphi^{-}_n\ (n=1,2,\dots)$, where $\varphi^{+}_n = 
\begin{pmatrix} 
x_2&x_1^n\\
0&x_0
\end{pmatrix},\ \varphi^{-}_n = 
\begin{pmatrix} 
x_0&-x_1^n\\
0&x_2
\end{pmatrix}$.
We have a commutative diagram with exact rows and columns:
$$
\begin{CD}
0 @. 0 @. 0 @. 0 \\
@VVV @VVV @VVV @VVV \\
R @>{x_2}>> R @>{x_0}>> R @>{x_2}>> R \\
@V{
\left(
\begin{smallmatrix}
1 \\
0
\end{smallmatrix}
\right)
}VV @V{
\left(
\begin{smallmatrix}
1 \\
0
\end{smallmatrix}
\right)
}VV @V{
\left(
\begin{smallmatrix}
1 \\
0
\end{smallmatrix}
\right)
}VV @V{
\left(
\begin{smallmatrix}
1 \\
0
\end{smallmatrix}
\right)
}VV \\
R^2 @>{\varphi_n^+}>> R^2 @>{\varphi_n^-}>> R^2 @>{\varphi_n^+}>> R^2 \\
@V{
\left(
\begin{smallmatrix}
0 & 1 
\end{smallmatrix}
\right)
}VV @V{
\left(
\begin{smallmatrix}
0 & 1 
\end{smallmatrix}
\right)
}VV @V{
\left(
\begin{smallmatrix}
0 & 1 
\end{smallmatrix}
\right)
}VV @V{
\left(
\begin{smallmatrix}
0 & 1 
\end{smallmatrix}
\right)
}VV \\
R @>{x_0}>> R @>{x_2}>> R @>{x_0}>> R \\
@VVV @VVV @VVV @VVV \\
0 @. 0 @. 0 @. 0
\end{CD}
$$
There are exact sequences $0 \to R/(x_2) \to R \to R/(x_0) \to 0$, $0 \to R/(x_2) \to \Coker\,\varphi^{+}_n \to R/(x_0) \to 0$ and $0 \to R/(x_0) \to \Coker\,\varphi^{-}_n \to R/(x_2) \to 0$.
We have $\NF(R/(x_0)) = \{ (x_0,x_2), (x_0,x_1,x_2) \}=\NF(R/(x_2))$, which implies $R/(x_0),R/(x_2)\in\mathcal{M}(R)$. 
Let $\p$ be any nonmaximal prime ideal.
Then one of $x_0,x_1,x_2$ is not in $\p$, and we have:
\begin{align*}
\begin{pmatrix} 
x_0&-x_1^n\\
0&\frac{1}{x_0}
\end{pmatrix}
\begin{pmatrix} 
x_2&x_1^n\\
0&x_0
\end{pmatrix}
=
\begin{pmatrix} 
0&0\\
0&1
\end{pmatrix}
& \quad\text{if } x_0 \notin \p,\\
\begin{pmatrix} 
\frac{x_0}{x_1^n}&-1\\
1&0
\end{pmatrix}
\begin{pmatrix} 
x_2&x_1^n\\
0&x_0
\end{pmatrix}
\begin{pmatrix} 
-x_1^n&x_0\\
x_2&\frac{1}{x_1^n}
\end{pmatrix}
=
\begin{pmatrix} 
0&0\\
0&1
\end{pmatrix}
& \quad\text{if } x_1 \notin \p,\text{ and}\\
\begin{pmatrix} 
0&1\\
1&0
\end{pmatrix}
\begin{pmatrix} 
x_2&x_1^n\\
0&x_0
\end{pmatrix}
\begin{pmatrix} 
-x_1^n&\frac{1}{x_2}\\
x_2&0
\end{pmatrix}
=
\begin{pmatrix} 
0&0\\
0&1
\end{pmatrix}
& \quad\text{if } x_2 \notin \p
\end{align*}
over $R_\p$.
In each case $(\Coker\,\varphi^{+}_n)_{\p}$ is isomorphic to $R_{\p}$.
Since $\Coker\,\varphi^{-}_n$ is the syzygy of $\Coker\,\varphi^{+}_n$, we see that $\Coker\,\varphi^{+}_n$ and $\Coker\,\varphi^{-}_n$ are not in $\mathcal{M}(R)$. 
Thus $\mathcal{M}(R) = \{ R/(x_0), R/(x_2) \}$ holds.

(2) By \cite[(5.7)]{BD} the following three assertions hold.
\begin{enumerate}[(i)]
\item
All indecomposable MCM $R$-modules are $R$, $\Coker\,\alpha^{+}$, $\Coker\,\alpha^{-}$, $\Coker\,\beta^{+}$, $\Coker\,\beta^{-}$, $\Coker\,\varphi^{+}_n$, $\Coker\,\varphi^{-}_n$, $\Coker\,\psi^{+}_n$ and $\Coker\,\psi^{-}_n\ (n=1,2,\dots)$, where $\alpha^{+} = \begin{pmatrix} 
x_2&x_0x_1\\
x_0&x_2
\end{pmatrix},\ \alpha^{-} = \begin{pmatrix} 
-x_2&x_0x_1\\
x_0&-x_2
\end{pmatrix}, \beta^{+} = \begin{pmatrix} 
x_0^2&x_2\\
x_2&x_1
\end{pmatrix},\ \beta^{-} = \begin{pmatrix} 
x_1&-x_2\\
-x_2&x_0^2
\end{pmatrix}$ and
\begin{align*}
& \varphi^{+}_n = 
\begin{pmatrix} 
x_2&x_0x_1&0&-x_1^{n+1}\\
x_0&x_2&x_1^n&0\\
0&0&x_2&x_0x_1\\
0&0&x_0&x_2
\end{pmatrix},\ 
\varphi^{-}_n = 
\begin{pmatrix} 
-x_2&x_0x_1&0&-x_1^{n+1}\\
x_0&-x_2&x_1^n&0\\
0&0&-x_2&x_0x_1\\
0&0&x_0&-x_2
\end{pmatrix},\\
& \psi^{+}_n = 
\begin{pmatrix} 
x_2&x_0x_1&-x_1^n&0\\
x_0&x_2&0&x_1^n\\
0&0&x_2&x_0x_1\\
0&0&x_0&x_2
\end{pmatrix},\ 
\psi^{-}_n = 
\begin{pmatrix} 
-x_2&x_0x_1&-x_1^n&0\\
x_0&-x_2&0&x_1^n\\
0&0&-x_2&x_0x_1\\
0&0&x_0&-x_2
\end{pmatrix}.
\end{align*}
\item
There are isomorphisms $\Coker\,\alpha^{+} \cong \Coker\,\alpha^{-}$, $\Coker\,\beta^{+} \cong \Coker\,\beta^{-}$, $\Coker\,\varphi^{+}_n \cong \Coker\,\varphi^{-}_n$ and $\Coker\,\psi^{+}_n \cong \Coker\,\psi^{-}_n$.
\item
The $R$-modules $\Coker\,\beta^{+},\ \Coker\,\varphi ^{+}_n,\ \Coker\,\psi^{+}_n$ are locally free on the punctured spectrum.
\end{enumerate}
We can easily check that the sequence $R^2\xrightarrow{\alpha^+}R^2\xrightarrow{(x_0,-x_2)}R\to R/(x_0,x_2)\to0$ is exact.
Hence $\Coker\,\alpha^{+}$ is isomorphic to the prime ideal $\p:=(x_0,x_2)$ of $R$.
For an ideal $I$ of $R$, denote by $\mathrm{V}(I)$ the set of prime ideals of $R$ containing $I$.
It is easy to see that $\NF(\p)$ is contained in $\mathrm{V}(\p)$, while $\p$ belongs to $\NF(\p)$ because the local ring $R_\p$ is not regular.
Thus we obtain $\NF(\p)=\mathrm{V}(\p)=\{\p,\m\}$, where $\m=(x_0,x_1,x_2)$ is the maximal ideal of $R$, and we have $\mathcal{M}(R)=\{\p\}$.
Setting $x_1^0=1$, for $n\ge 0$ we have commutative diagrams
$$
\begin{CD}
0 @. 0 @. 0 @. 0 \\
@VVV @VVV @VVV @VVV \\
R^2 @>{\alpha^+}>> R^2 @>{\alpha^-}>> R^2 @>{\alpha^+}>> R^2 \\
@V{
\left(
\begin{smallmatrix}
1 & 0 \\
0 & 1 \\
0 & 0 \\
0 & 0
\end{smallmatrix}
\right)
}VV @V{
\left(
\begin{smallmatrix}
1 & 0 \\
0 & 1 \\
0 & 0 \\
0 & 0
\end{smallmatrix}
\right)
}VV @V{
\left(
\begin{smallmatrix}
1 & 0 \\
0 & 1 \\
0 & 0 \\
0 & 0
\end{smallmatrix}
\right)
}VV @V{
\left(
\begin{smallmatrix}
1 & 0 \\
0 & 1 \\
0 & 0 \\
0 & 0
\end{smallmatrix}
\right)
}VV \\
R^4 @>{\varphi_n^+}>> R^4 @>{\varphi_n^-}>> R^4 @>{\varphi_n^+}>> R^4 \\
@V{
\left(
\begin{smallmatrix}
0 & 0 & 1 & 0 \\
0 & 0 & 0 & 1
\end{smallmatrix}
\right)
}VV @V{
\left(
\begin{smallmatrix}
0 & 0 & 1 & 0 \\
0 & 0 & 0 & 1
\end{smallmatrix}
\right)
}VV @V{
\left(
\begin{smallmatrix}
0 & 0 & 1 & 0 \\
0 & 0 & 0 & 1
\end{smallmatrix}
\right)
}VV @V{
\left(
\begin{smallmatrix}
0 & 0 & 1 & 0 \\
0 & 0 & 0 & 1
\end{smallmatrix}
\right)
}VV \\
R^2 @>{\alpha^+}>> R^2 @>{\alpha^-}>> R^2 @>{\alpha^+}>> R^2 \\
@VVV @VVV @VVV @VVV \\
0 @. 0 @. 0 @. 0
\end{CD}
\qquad\qquad
\begin{CD}
0 @. 0 @. 0 @. 0 \\
@VVV @VVV @VVV @VVV \\
R^2 @>{\alpha^+}>> R^2 @>{\alpha^-}>> R^2 @>{\alpha^+}>> R^2 \\
@V{
\left(
\begin{smallmatrix}
1 & 0 \\
0 & 1 \\
0 & 0 \\
0 & 0
\end{smallmatrix}
\right)
}VV @V{
\left(
\begin{smallmatrix}
1 & 0 \\
0 & 1 \\
0 & 0 \\
0 & 0
\end{smallmatrix}
\right)
}VV @V{
\left(
\begin{smallmatrix}
1 & 0 \\
0 & 1 \\
0 & 0 \\
0 & 0
\end{smallmatrix}
\right)
}VV @V{
\left(
\begin{smallmatrix}
1 & 0 \\
0 & 1 \\
0 & 0 \\
0 & 0
\end{smallmatrix}
\right)
}VV \\
R^4 @>{\psi_n^+}>> R^4 @>{\psi_n^-}>> R^4 @>{\psi_n^+}>> R^4 \\
@V{
\left(
\begin{smallmatrix}
0 & 0 & 1 & 0 \\
0 & 0 & 0 & 1
\end{smallmatrix}
\right)
}VV @V{
\left(
\begin{smallmatrix}
0 & 0 & 1 & 0 \\
0 & 0 & 0 & 1
\end{smallmatrix}
\right)
}VV @V{
\left(
\begin{smallmatrix}
0 & 0 & 1 & 0 \\
0 & 0 & 0 & 1
\end{smallmatrix}
\right)
}VV @V{
\left(
\begin{smallmatrix}
0 & 0 & 1 & 0 \\
0 & 0 & 0 & 1
\end{smallmatrix}
\right)
}VV \\
R^2 @>{\alpha^+}>> R^2 @>{\alpha^-}>> R^2 @>{\alpha^+}>> R^2 \\
@VVV @VVV @VVV @VVV \\
0 @. 0 @. 0 @. 0
\end{CD}
$$
with exact rows and columns.
There is an equality of matrices over $R$
$$
\begin{pmatrix}
0 & -x_0 & 0 & 1 \\
-1 & 0 & 0 & 0 \\
x_0 & -x_2 & 1 & 0 \\
0 & 1 & 0 & 0
\end{pmatrix}
\begin{pmatrix}
x_2 & x_0x_1 & 0 & -x_1 \\
x_0 & x_2 & 1 & 0 \\
0 & 0 & x_2 & x_0x_1 \\
0 & 0 & x_0 & x_2
\end{pmatrix}
\begin{pmatrix}
-1 & 0 & 0 & 0 \\
0 & 0 & 1 & 0 \\
x_0 & 0 & -x_2 & 1 \\
0 & 1 & x_0 & 0
\end{pmatrix}
=
\begin{pmatrix}
x_0^2 & x_2 & 0 & 0 \\
x_2 & x_1 & 0 & 0 \\
0 & 0 & 0 & 0 \\
0 & 0 & 0 & 1
\end{pmatrix},
$$
which gives an isomorphism $\Coker\,\varphi_0^+\cong\Coker\,\beta^+\oplus R$.
Consequently, we obtain exact sequences $0 \to \Coker\,\alpha^+ \to \Coker\,\beta^+\oplus R \to \Coker\,\alpha^+ \to 0$, $0 \to \Coker\,\alpha^{+} \to \Coker\,\varphi^{+}_n \to \Coker\,\alpha^{+} \to 0$ and $0 \to \Coker\,\alpha^{+} \to \Coker\,\psi^{+}_n \to \Coker\,\alpha^{+} \to 0$.
This completes the proof of the second assertion of the proposition.
\end{proof}

\section{General case}

In this section, we give a proof of Theorem \ref{thm0} in the general case.
Throughout this section, we assume that $k$ is an algebraically closed field of characteristic different from two.
First of all, let us recall the definition of a matrix factorization.

\begin{definition}
Let $S=k[[x_0,x_1,\dots,x_n]]$ and $0\ne f\in(x_0,x_1,\dots,x_n)S$.
A pair of square matrices $(A,B)$ with entries being in $S$ is called a {\it matrix factorization} of $f$ if it satisfies $AB=BA=fE$, where $E$ is an identity matrix.
\end{definition}

Let $\MF (f)$ be the category of matrix factorizations of $f$.
Eisenbud \cite[\S 6]{E} (see also \cite[Chap. 7]{Y}) proved that taking the cokernel induces a category equivalence $\MF (f)/\langle(1,f)\rangle\overset{\cong}{\longrightarrow}\CM (S/(f))$, where $\langle(1,f)\rangle$ denotes the ideal of the category $\MF (f)$ generated by $(1,f)$.
We identify $\CM (S/(f))$ with $\MF (f)/\langle(1,f)\rangle$.

The following lemma is called Kn\"orrer's periodicity (cf. \cite[(3.1)]{K}, \cite[Chap. 12]{Y}) and it will play a key role in this section.

\begin{lemma}\label{knorrer}
Let $S=k[[x_0,x_1,\dots,x_n]]$ and $T=k[[x_0,x_1,\dots,x_n,y,z]]$ be formal power series rings and $f \in (x_0,x_1,\dots,x_n)S$ a nonzero element.
The functor $\MF (f) \to \MF (f+yz)$ given by $(A,B)\mapsto\left(\left(
\begin{smallmatrix}
A & yE\\
zE & -B
\end{smallmatrix}
\right),\left(
\begin{smallmatrix}
B & yE\\
zE & -A
\end{smallmatrix}
\right)\right)$ induces a triangle equivalence between the stable categories $F: \underline{\CM} (S/(f))\overset{\cong}{\longrightarrow}\underline{\CM} (T/(f+yz))$.
\end{lemma}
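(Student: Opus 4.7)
The plan is to carry out four steps: verify the functor is well-defined at the level of matrix factorizations; show it descends to the stable categories; check it is a triangle functor; and establish that it is an equivalence. For the first, that the assignment produces a matrix factorization of $f+yz$ is a direct block computation: in the product $\left(\begin{smallmatrix} A & yE \\ zE & -B \end{smallmatrix}\right)\left(\begin{smallmatrix} B & yE \\ zE & -A \end{smallmatrix}\right)$, the off-diagonal blocks are $yA-yA=0$ and $zB-zB=0$, while the diagonal blocks both equal $AB+yzE=(f+yz)E$; the reverse product is analogous. Morphisms of matrix factorizations are sent to block-diagonal assemblies, giving functoriality.

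For the second step, the image of the trivial factorization $(1,f)$ is $\left(\left(\begin{smallmatrix}1&y\\z&-f\end{smallmatrix}\right),\left(\begin{smallmatrix}f&y\\z&-1\end{smallmatrix}\right)\right)$; elementary row and column operations over $T$ pivoting on the upper-left unit reduce this to the trivial factorization $(E_2,(f+yz)E_2)$. Hence the ideal $\langle(1,f)\rangle$ is mapped into $\langle(1,f+yz)\rangle$, and so the assignment induces a well-defined functor $F$ on the stable categories. For the triangulated structure, the shift on the matrix factorization side corresponds to the involution $(A,B)\mapsto(B,A)$, and a direct inspection shows that $F$ intertwines this involution up to natural isomorphism; preservation of standard triangles is then automatic from additivity of the block construction.

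The substantive part is to show $F$ is an equivalence. For full faithfulness, I would compute $\underline{\Hom}_{T/(f+yz)}(F(X),F(Y))$ via homotopy classes of chain maps between the block matrix factorizations and match this directly with $\underline{\Hom}_{S/(f)}(X,Y)$, using the block structure to cancel the contributions coming from the $yE$ and $zE$ entries against each other. For essential surjectivity, given an MCM $T/(f+yz)$-module $N$, I would analyze the pair of commuting $S$-linear multiplications on $N$ by $y$ and $z$: a linear change of coordinates (available because $\mathrm{char}\,k\ne 2$ and $k$ is algebraically closed) diagonalizes the quadratic $yz$, and the associated idempotents split the $(y,z)$-action on $N$, allowing one to extract an $S/(f)$-module whose image under $F$ recovers $N$.

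The main obstacle is the essential surjectivity, and that is precisely where the characteristic hypothesis enters: to invert the block construction one must split an MCM $T/(f+yz)$-module along a decomposition of the quadratic form $yz$, which fails when $2$ is not invertible. For a complete treatment along these lines I would refer the reader to \cite[Chap.~12]{Y}.
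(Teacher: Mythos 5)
The paper does not actually prove this lemma: it is Kn\"orrer's periodicity, quoted with references to \cite[(3.1)]{K} and \cite[Chap.~12]{Y}, so there is no internal proof to compare against. Your verification of the formal part is the standard one and is correct: the block computation showing that the image of $(A,B)$ is a matrix factorization of $f+yz$, and the compatibility with the shift $(A,B)\mapsto(B,A)$. One correction in your second step: reducing $\left(\left(\begin{smallmatrix}1&y\\z&-f\end{smallmatrix}\right),\left(\begin{smallmatrix}f&y\\z&-1\end{smallmatrix}\right)\right)$ by the pivoting you describe yields $(1,f+yz)\oplus(f+yz,1)$, not $(E_2,(f+yz)E_2)$. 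The summand $(f+yz,1)$ has cokernel the free module $T/(f+yz)$ and does not lie in $\langle(1,f+yz)\rangle$. This is harmless for the statement at hand --- both summands become zero in the stable category, which is precisely why $F$ is asserted only as a functor on $\underline{\CM}$ and not on $\CM$ --- but your claim that $\langle(1,f)\rangle$ lands in $\langle(1,f+yz)\rangle$ is not quite right, and the distinction is the reason the lemma is a statement about stable categories.

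The genuine gap is the equivalence itself. Full faithfulness and essential surjectivity are the entire content of Kn\"orrer's theorem, and your paragraph on them is a plan rather than an argument. Moreover, the mechanism you propose for essential surjectivity --- diagonalizing the quadratic $yz$ and splitting the $(y,z)$-action on $N$ by idempotents --- is not the standard argument and, as described, does not produce an $S/(f)$-module: the proofs in \cite{K} and \cite[Chap.~12]{Y} instead pass through the double branched cover $S[[y]]/(f+y^2)$, use the restriction and extension functors along $S/(f)\to S[[y]]/(f+y^2)$, and exploit the involution $y\mapsto-y$; it is in splitting modules into $\pm1$-eigenspaces of that involution that $\mathrm{char}\,k\ne2$ is used. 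You have also mislocated the role of the characteristic hypothesis in this paper: it is consumed in the proof of Theorem \ref{thm0}, where $x_{d-1}^2+x_d^2$ is rewritten as $yz$ by a linear change of variables over the algebraically closed field $k$ with $2$ invertible; the $f\mapsto f+yz$ form of the periodicity stated in the lemma is in fact known to hold without any restriction on the characteristic. Since you, like the paper, ultimately defer to \cite[Chap.~12]{Y}, your write-up is acceptable as a citation of a known theorem, but it should not be presented as a proof of the equivalence.
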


Here we recall some basic properties of the stable category of MCM modules.
Let $R$ be a Henselian Gorenstein local ring.
For MCM $R$-modules $M$ and $N$, we have $M\cong N$ in $\underline{\CM}(R)$ if and only if $M\oplus R^m\cong N\oplus R^n$ in $\CM(R)$ for some $m,n\ge 0$.
Since $R$ is Henselian, for any object $M\in\underline{\CM}(R)$ there exists a unique object $M_0\in\CM(R)$ such that $M_0$ has no nonzero free summand and that $M_0\cong M$ in $\underline{\CM}(R)$.
If $M$ is an indecomposable object of $\underline{\CM}(R)$, then $M_0$ is a nonfree indecomposable MCM $R$-module.

\begin{proposition}\label{prop3}
Let $S,T,f$ and $F$ be as in Lemma \ref{knorrer}.
Put $R=T/(f+yz)$ and $R'=S/(f)$.
For a nonfree MCM $R'$-module $M$, the following statements hold.
\begin{enumerate}[\rm (1)]
\item
One has an inclusion $\NF_{R}(FM)\subseteq\mathrm{V}_R(y,z)$.
\item
One has a bijection $\Phi:\NF_{R}(FM) \rightarrow \NF_{R'}(M)$ which sends $\fP$ to $\fP/(y,z)$. 
\end{enumerate}
\end{proposition}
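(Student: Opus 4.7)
The plan is to exploit the explicit presenting matrix $\Phi_A := \left(\begin{smallmatrix} A & yE \\ zE & -B \end{smallmatrix}\right)$ of $FM$ over $R$ together with the identities $AB=BA=fE=-yzE$ (valid in $R$). For (1), suppose $\fP\in\Spec R$ does not contain $(y,z)$; by symmetry we may assume $y\notin\fP$, so $y$ is a unit in $R_\fP$. Multiplying $\Phi_A$ on the left by $\left(\begin{smallmatrix} E & 0 \\ y^{-1}B & E \end{smallmatrix}\right)$ replaces the lower-left block by $y^{-1}BA+zE=0$, yielding $\left(\begin{smallmatrix} A & yE \\ 0 & 0 \end{smallmatrix}\right)$; a column swap followed by multiplication on the right by $\left(\begin{smallmatrix} E & -y^{-1}A \\ 0 & E \end{smallmatrix}\right)$ further reduces this to $\left(\begin{smallmatrix} yE & 0 \\ 0 & 0 \end{smallmatrix}\right)$, whose cokernel over $R_\fP$ is free. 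Hence $(FM)_\fP$ is free, so $\NF_R(FM)\subseteq\mathrm{V}_R(y,z)$.

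For (2), the quotient $R\twoheadrightarrow R/(y,z)=T/(f,y,z)=S/(f)=R'$ puts $\mathrm{V}_R(y,z)$ in bijection with $\Spec R'$ via $\fP\mapsto\p:=\fP/(y,z)$, so by (1) the map $\Phi$ lands in $\Spec R'$. It then suffices to show, for $\fP\in\mathrm{V}_R(y,z)$, that $(FM)_\fP$ is free over $R_\fP$ iff $M_\p$ is free over $R'_\p$. Reducing $\Phi_A$ modulo $(y,z)$ yields the block-diagonal matrix $\left(\begin{smallmatrix} A & 0 \\ 0 & -B \end{smallmatrix}\right)$ over $R'$, whence $FM/(y,z)FM\cong\Coker(A)\oplus\Coker(B)=M\oplus\Omega M$ as $R'$-modules. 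Since $R$ is a Cohen-Macaulay hypersurface with $R/(y,z)\cong R'$ of codimension $2$, the sequence $y,z$ is part of a system of parameters, hence is $R$-regular and also $FM$-regular ($FM$ being the cokernel of a matrix factorization, hence MCM over $R$). A standard Nakayama-type criterion along a regular sequence then gives $(FM)_\fP$ free over $R_\fP$ iff $(M\oplus\Omega M)_\p$ is free over $R_\fP/(y,z)R_\fP=R'_\p$, which over the local ring $R'_\p$ is equivalent to both $M_\p$ and $(\Omega M)_\p$ being free.

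To close the loop, observe that $M_\p$ is $R'_\p$-free iff $(\Omega M)_\p$ is: the forward direction comes from splitting the localization at $\p$ of the short exact sequence $0\to\Omega M\to(R')^r\to M\to0$, while the converse follows by applying this forward direction to $\Omega M$ together with the hypersurface periodicity $\Omega^2 M\cong M$ in $\underline{\CM}(R')$ (giving $(\Omega^2 M)_\p$ free iff $M_\p$ free after localization). Consequently $(FM)_\fP$ is free iff $M_\p$ is free, and $\Phi$ restricts to the asserted bijection $\NF_R(FM)\to\NF_{R'}(M)$.

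The most delicate ingredient is the descent-of-freeness step in the second paragraph: identifying $FM$ as MCM, verifying that $y,z$ is $FM$-regular, and invoking the standard lemma that a finitely generated module $N$ over a Noetherian local ring is free iff $N/(y,z)N$ is free over the quotient, when $y,z$ is regular on both the ring and $N$. The matrix manipulations of part (1) and the block-diagonal reduction in (2) are mechanical once this freeness-detection tool and the syzygy periodicity are available.
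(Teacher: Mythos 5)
Your proposal is correct and follows essentially the same route as the paper: part (1) by localizing the presentation matrix $\left(\begin{smallmatrix} A & yE \\ zE & -B \end{smallmatrix}\right)$ at a prime missing $y$ (or $z$) and trivializing it via the relation $BA=-yzE$, and part (2) by reducing modulo the regular sequence $y,z$ to get $FM/(y,z)FM\cong M\oplus\Omega_{R'}M$ up to free summands and then descending freeness along that regular sequence. The only cosmetic difference is that you handle the equivalence ``$M_\p\oplus(\Omega_{R'}M)_\p$ free iff $M_\p$ free'' via periodicity $\Omega^2M\cong M$, where the shorter observation that a direct summand of a free module over a local ring is free already gives one direction and the splitting of the localized syzygy sequence gives the other.
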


\begin{proof}
(1) Let $(A,B) \in \MF (f)$ be a matrix factorization corresponding to $M$.
Then $\Coker\left(
\begin{smallmatrix}
A & yE\\
zE & -B
\end{smallmatrix}
\right)$ is isomorphic to $FM$ up to free summand, and $FM$ is a nonfree MCM $R$-module.
Let $\fP\in\Spec\,R$ with $y\notin\fP$.
Then there is an equality $\begin{pmatrix} 
\frac{1}{y}B&E\\
\frac{1}{y}E&0
\end{pmatrix}
\begin{pmatrix} 
A & yE\\
zE & -B
\end{pmatrix}
\begin{pmatrix} 
yE&0\\
-A&E
\end{pmatrix}
=
\begin{pmatrix} 
0&0\\
0&E
\end{pmatrix}$ of matrices over $R_\fP$.
Hence $(FM)_{\fP}$ is a free $R_{\fP}$-module, which implies $\NF_{R}(FM)\subseteq\mathrm{V}_{R}(y)$.
Similarly, $\NF_{R}(FM)\subseteq\mathrm{V}_{R}(z)$ holds.

(2) The assignment $\fP\mapsto\fP/(y,z)$ makes a bijection $\mathrm{V}_R(y,z)\to\Spec\,R'$.
For $\fP \in \mathrm{V}_{R}(y,z)$, set $\p = \fP /(y,z) \in \Spec\, R'$.
It is enough to show that $\fP\in\NF_{R}(FM)$ if and only if $\p\in\NF_{R'}(M)$.
There are a ring isomorphism $R/(y,z) \cong R'$ and an $R'$-module isomorphism $FM/(y,z)FM \cong M \oplus \Omega_{R'} M$ up to free summand.
We have $R'_\p$-module isomorphisms $(FM)_{\fP}/(y,z)(FM)_{\fP}\cong \left( FM/(y,z)FM \right) _{\fP}\cong M_{\fP} \oplus (\Omega_{R'} M)_{\fP} \cong M_\p \oplus (\Omega_{R'} M)_\p \cong M_\p \oplus \Omega_{R'_\p} M_\p$ up to free $R'_\p$-summand.
Since $y,z$ is an $R$-regular sequence and $FM$ is a (nonzero) MCM $R$-module, the sequence $y,z$ is $FM$-regular.
Hence $(FM)_{\fP}$ is $R_{\fP}$-free if and only if $M_\p \oplus \Omega_{R'_\p} M_\p$ is $R'_\p$-free.
Therefore $\fP \in \NF_{R}(FM)$ if and only if $\p \in \NF_{R'}(M)$.
\end{proof}

Note that in general a nonfree finitely generated module $M$ over a commutative local ring $(R,\m)$ is locally free on the punctured spectrum of $R$ if and only if the equality $\NF_R(M)=\{\m\}$ holds.
Thus Proposition \ref{prop3} yields the corollary below.

\begin{corollary}\label{cor1}
With the notation of Proposition \ref{prop3}, $M$ is locally free on the punctured spectrum of $R'$ if and only if $FM$ is locally free on the punctured spectrum of $R$. 
\end{corollary}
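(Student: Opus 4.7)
The plan is to deduce the corollary directly from Proposition \ref{prop3}, combined with the characterization recalled just before the statement: for a nonfree finitely generated module $N$ over a commutative local ring $(A,\n)$, the module $N$ is locally free on the punctured spectrum of $A$ if and only if $\NF_A(N)=\{\n\}$.

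First I would check that this characterization applies on both sides. The hypothesis of Proposition \ref{prop3} already forces $M$ to be nonfree, and its proof observes that $FM$ is a nonfree MCM $R$-module. Hence it suffices to establish the equivalence
\[
\NF_{R'}(M)=\{\m_{R'}\}\iff\NF_{R}(FM)=\{\m_R\},
\]
where $\m_R$ and $\m_{R'}$ denote the maximal ideals of $R$ and $R'$ respectively.

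Next I would feed this reduction into the bijection $\Phi:\NF_{R}(FM)\to\NF_{R'}(M)$, $\fP\mapsto\fP/(y,z)$, furnished by Proposition \ref{prop3}(2). Under the ring isomorphism $R/(y,z)\cong R'$, the maximal ideal $\m_R=(x_0,\dots,x_n,y,z)R$---which automatically belongs to $\mathrm{V}_R(y,z)$ and so is a legitimate candidate for an element of $\NF_R(FM)$---is identified with $\m_{R'}=(x_0,\dots,x_n)R'$, so $\Phi(\m_R)=\m_{R'}$. Since $\Phi$ is a bijection, $\NF_{R}(FM)$ is the singleton $\{\m_R\}$ if and only if $\NF_{R'}(M)$ is the singleton $\{\m_{R'}\}$, which is precisely the equivalence needed. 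There is no real obstacle here: the content of the corollary is already packaged into Proposition \ref{prop3}, and all that remains is the one-line check that $\Phi$ matches the two maximal ideals.
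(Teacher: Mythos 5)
Your proposal is correct and matches the paper's argument: the paper likewise derives the corollary from the characterization of ``locally free on the punctured spectrum'' as $\NF=\{\m\}$ together with the bijection $\Phi$ of Proposition \ref{prop3}(2), which you correctly note carries $\m_R$ to $\m_{R'}$. The paper leaves these details implicit, so your write-up is simply a fuller version of the same one-line deduction.
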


Now we can prove Theorem \ref{thm0} in the general case.

\begin{tpf}
Let $R = k[[x_0,x_1,x_2,\ldots ,x_d]]/(f)$, where $f$ is either $({A}^d_{\infty})$ or $({D}^d_{\infty})$.
We induce on $d=\dim R$.
We may assume $d \geq 3$ thanks to Propositions \ref{prop1} and \ref{prop2}.
Put $R' = k[[x_0,x_1,x_2,\ldots ,x_{d-2}]]/(f')$ such that $f=f'+x_{d-1}^2+x_d^2$.
Then $f'$ is either $({A}^{d-2}_{\infty})$ or $({D}^{d-2}_{\infty})$.
As the characteristic of $k$ is not $2$, the ring $R$ is isomorphic to $k[[x_0,x_1,x_2,\ldots ,x_{d-2},y,z]]/(f'+yz)$, where $y,z$ are indeterminates over $k[[x_0,x_1,x_2,\dots,x_{d-2}]]$.
Let $F:\underline{\CM} (R') \to \underline{\CM} (R)$ be the equivalence in Lemma \ref{knorrer}.

(1) There is a nonfree indecomposable MCM $R'$-module $X_{R'}$ with $\mathcal{M}(R') = \{ X_{R'}, \Omega_{R'}(X_{R'})\}$ and $\NF_{R'}(X_{R'}) = \{ (x_0, x_2, \dots , x_{d-2})R', (x_0, x_1, x_2,\dots , x_{d-2})R'\} = \NF_{R'}(\Omega _{R'}(X_{R'}))$ by the induction hypothesis. 
Let $X_R$ be the nonfree direct summand of $FX_{R'}$.
Recall that the shift functor on the triangulated category $\underline{\CM} (R)$ (respectively, $\underline{\CM} (R')$) is the cosyzygy functor $\Omega^{-1}_R$ (respectively, $\Omega^{-1}_{R'}$).
Hence $F$ commutes with the syzygy functor, and we have $F(\Omega _{R'}(X_{R'})) \cong \Omega _R (FX_{R'}) \cong \Omega _R (X_R)$ in $\underline{\CM} (R)$.
It follows from this that $\Omega _R(X_R)$ is the nonfree direct summand of $F(\Omega _{R'}(X_{R'}))$.
By Proposition \ref{prop3} and Corollary \ref{cor1}, we see that $\mathcal{M}(R) = \{ X_R, \Omega_R (X_R) \}$ and that $\NF_R(X_{R}) = \{ (x_0, x_2, \dots , x_{d})R, (x_0, x_1, x_2,\dots , x_{d})R\} = \NF_R(\Omega _{R}(X_{R}))$.

(2) Let $M\in\mathcal{P}(R)$ be an indecomposable $R$-module.
Then there exists $M'\in\CM(R')$ such that $FM'$ is isomorphic to $M$ up to free summand.
Since the $R$-module $M$ is indecomposable, $M'$ can be chosen as an indecomposable $R'$-module.
Corollary \ref{cor1} implies that $M'$ is in $\mathcal{P}(R')$.
By induction hypothesis, there is an exact sequence $0 \to L' \to M'\oplus R'^n \to N' \to 0$, where $L', N' \in \mathcal{M}(R')$ and $n\ge 0$. 
Then we obtain an exact triangle $L' \to M' \to N' \to \Omega _{R'}^{-1} L'$ in $\underline{\CM} (R')$, which gives an exact triangle $FL' \to M \to FN' \to \Omega _R^{-1}FL'$ in $\underline{\CM} (R)$.
Let $L$ and $N$ be the nonfree direct summands of $FL'$ and $FN'$, respectively.
Then we have an exact triangle $L\to M\to N \to \Omega_R^{-1}L$, which gives a short exact sequence $0 \to L \to M \oplus R^m \to N \to 0$ of $R$-modules.
Since $L'$ and $N'$ belong to $\mathcal{M}(R')$, the modules $L$ and $N$ are in $\mathcal{M}(R)$ by Corollary \ref{cor1}.
\qed
\end{tpf}

\section{Applications}

In this section, we give some applications of Theorem \ref{thm0}.
First, we calculate the dimension of the triangulated category $\underline{\CM}(R)$.
For the definition of the dimension of a triangulated category, see \cite[(3.2)]{R}.

\begin{proposition}
The dimension of $\underline{\CM}(R)$ is equal to $1$.
\end{proposition}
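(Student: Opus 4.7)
The plan is to establish the two inequalities $\dim\underline{\CM}(R)\le 1$ and $\dim\underline{\CM}(R)\ge 1$ separately.

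For the upper bound, my candidate generator is $X_R$. First I would invoke Eisenbud's matrix factorization periodicity for hypersurfaces: on $\underline{\CM}(R)$ one has $\Omega_R^2\cong\mathrm{id}$, so the suspension $\Sigma=\Omega_R^{-1}$ squares to the identity; in particular $\Omega_R X_R$ is a shift of $X_R$. By Theorem \ref{thm0}(1) this already places every object of $\mathcal{M}(R)$ in the additive, shift- and summand-closed subcategory $\langle X_R\rangle_1$. Next, for each indecomposable $M\in\mathcal{P}(R)$, the short exact sequence of Theorem \ref{thm0}(2) becomes a triangle $L\to M\to N\to\Sigma L$ in $\underline{\CM}(R)$, because the free summand $R^n$ vanishes in the stable category; since $L,N\in\mathcal{M}(R)\subseteq\langle X_R\rangle_1$, this puts $M$ into $\langle X_R\rangle_2$. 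Finally, because $R$ is complete local the Krull--Schmidt theorem applies, so every MCM module is a finite direct sum of indecomposables, and each such indecomposable lies either in $\mathcal{M}(R)$ or in $\mathcal{P}(R)$; I would then conclude $\underline{\CM}(R)=\langle X_R\rangle_2$, whence $\dim\underline{\CM}(R)\le 1$.

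For the lower bound I would argue by contradiction. If $\dim\underline{\CM}(R)=0$, then $\underline{\CM}(R)=\langle G\rangle_1$ for some object $G$, so every indecomposable is a direct summand of a finite direct sum of shifts of $G$. The periodicity $\Sigma^2\cong\mathrm{id}$ collapses the shifts of $G$ to the two objects $G$ and $\Sigma G$, bounding the number of isomorphism classes of indecomposables in $\underline{\CM}(R)$ by twice the number of indecomposable summands of $G$. This contradicts the hypothesis that $R$ has countable (hence infinite) CM representation type.

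Given Theorem \ref{thm0}, neither direction is deep; the only point needing care is converting the exact sequence $0\to L\to M\oplus R^n\to N\to 0$ into a genuine triangle on $M$ in $\underline{\CM}(R)$, which I would handle by noting that $R^n$ is a zero object in the stable category, so $M\oplus R^n\cong M$ there, and then invoking the standard fact that short exact sequences of MCM modules induce distinguished triangles in $\underline{\CM}(R)$.
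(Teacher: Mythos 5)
Your proof is correct and follows essentially the same route as the paper: the upper bound comes from Theorem \ref{thm0} (the exact sequences of part (2) together with $\mathcal{M}(R)=\{X_R,\Omega X_R\}$ and the $2$-periodicity of the syzygy functor), and the lower bound from the fact that $R$ has infinitely many indecomposable MCM modules while $\Sigma^2\cong\mathrm{id}$ forces $\langle G\rangle_1$ to contain only finitely many indecomposables. The paper states this very tersely; you have merely supplied the details it leaves implicit.
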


\begin{proof}
Theorem \ref{thm0}(2) especially says that the dimension of $\underline{\CM}(R)$ is at most $1$.
Note that our hypersurface $R$ is not of finite CM representation type and that every MCM $R$-module is isomorphic to its second syzygy up to free summand.
Hence the dimension of $\underline{\CM}(R)$ is nonzero.
Now the conclusion follows.
\end{proof}

Next, let us consider the Grothendieck group $K_0(\CM(R))$ of $\CM(R)$.
Applying Proposition \ref{prop1} and \ref{prop2}, we can calculate the Grothendieck group $K_0(\CM(R))$ of $\CM(R)$ for the hypersurfaces $R$ of types $(A_\infty^1)$, $(D_\infty^1)$, $(A_\infty^2)$ and $(D_\infty^2)$.

\begin{proposition}\label{1755}
With the notation of Theorem \ref{thm0}, we have
$$
K_0(\CM(R))\cong
\begin{cases}
\Z & \text{if }f = (A^1_{\infty}),\\
\Z^2 & \text{if }f = (D^1_{\infty})\text{ or }f = (A^2_{\infty}),\\
\Z \oplus \Z/2\Z & \text{if }f = (D^2_{\infty}).
\end{cases}
$$
\end{proposition}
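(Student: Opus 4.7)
The plan is to read off a presentation of $K_0(\CM(R))$ in each of the four cases from the explicit classifications and short exact sequences already exhibited in Propositions \ref{prop1} and \ref{prop2}, and then to certify that presentation from below using additive invariants.

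First I would translate each listed short exact sequence into an additive relation in $K_0(\CM(R))$. In every case this collapses the (possibly infinite) list of indecomposable classes into the $\Z$-span of a short list of fundamental classes: $[R/(x_0)]$ for $(A^1_\infty)$; $\{[R/(x_0)],\,[R/(x_0x_1)]\}$ for $(D^1_\infty)$; $\{[R/(x_0)],\,[R/(x_2)]\}$ for $(A^2_\infty)$; and $\{[R],\,[\p]\}$ with $\p=(x_0,x_2)$ for $(D^2_\infty)$. In the last case the isomorphism $\p\cong\Omega\p$ from Proposition \ref{prop2}(2), substituted into the minimal resolution $0\to\Omega\p\to R^2\to\p\to 0$, furnishes the extra relation $2[\p]=2[R]$. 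Thus $K_0(\CM(R))$ is a quotient of $\Z$, $\Z^2$, $\Z^2$, and $\Z\oplus\Z/2\Z$ in the four cases respectively.

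To match this upper bound from below in the first three cases, I would use the length $M\mapsto \ell_{R_\q}(M_\q)$ at the minimal prime(s) $\q$ of $R$: this is additive on short exact sequences of modules, and a direct calculation on the fundamental classes shows that it yields a surjection onto $\Z$ (respectively, $\Z^2$). Combined with the upper bound on the number of generators, this forces the desired isomorphisms in the cases $(A^1_\infty)$, $(D^1_\infty)$, $(A^2_\infty)$.

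The main obstacle is the $(D^2_\infty)$ case, where $R$ is a two-dimensional Cohen-Macaulay \emph{domain}, so the generic-rank map sends both $[R]$ and $[\p]$ to $1$ and is blind to the prospective $2$-torsion element $[\p]-[R]$. To detect this torsion I would pass to the stable category: since $R$ is Gorenstein, $\CM(R)$ is a Frobenius exact category with $R$ as the only indecomposable projective-injective, so $K_0(\underline{\CM}(R))\cong K_0(\CM(R))/\Z[R]$, and because $R$ is a domain the rank map splits the canonical exact sequence $0\to\Z[R]\to K_0(\CM(R))\to K_0(\underline{\CM}(R))\to 0$. It then suffices to prove $K_0(\underline{\CM}(R))\cong\Z/2\Z$; the relations already derived imply every stable class is a $\Z$-multiple of $[\p]$ with $2[\p]=0$, so one only needs to show $[\p]\neq 0$ in the stable Grothendieck group. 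This final non-vanishing could be certified, for instance, by a $\Z/2$-valued parity invariant built from Schreyer's description of the AR-quiver of the stable category of $\mathcal{P}(R)$ in the $(D^2_\infty)$ shape, completing the identification $K_0(\CM(R))\cong\Z\oplus\Z/2\Z$.
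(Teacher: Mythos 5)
Your handling of the cases $(A^1_\infty)$, $(D^1_\infty)$ and $(A^2_\infty)$ is correct and is, in substance, the paper's own argument: the short exact sequences from Propositions \ref{prop1} and \ref{prop2} collapse the generators onto a list of one or two fundamental classes, and your length functions at the minimal primes are precisely the paper's surjection $K_0(\CM(R))\to K_0(\mod\,Q)$ onto the Grothendieck group of the total quotient ring, which is free on the maximal ideals of $Q$ (one per minimal prime of $R$). The upper bound for $(D^2_\infty)$, namely that $K_0(\CM(R))$ is a quotient of $\Z^2/\langle(2,-2)\rangle\cong\Z\oplus\Z/2\Z$ generated by $[R]$ and $[\p]$ with $\p=(x_0,x_2)$, is also correct, as is the reduction via Lemma \ref{k0} to showing $[\p]\ne0$ in $K_0(\underline{\CM}(R))$.

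The gap is exactly at the step you flag as the crux and then do not carry out: proving $[\p]\ne[R]$ in $K_0(\CM(R))$, which is what separates $\Z\oplus\Z/2\Z$ from $\Z$. Your proposed certificate --- a ``$\Z/2$-valued parity invariant built from Schreyer's description of the AR-quiver of the stable category of $\mathcal{P}(R)$'' --- is only a gesture, and as stated it cannot work for two reasons. First, $\p$ is the unique member of $\mathcal{M}(R)$ by Proposition \ref{prop2}(2), so it does not lie in $\mathcal{P}(R)$ at all; an invariant defined from the AR-quiver of $\mathcal{P}(R)$ says nothing about the class $[\p]$. Second, even a function on all of $\underline{\CM}(R)$ that is additive along almost split sequences does not automatically induce a homomorphism on $K_0(\underline{\CM}(R))$: the Grothendieck group is cut out by relations from \emph{all} exact triangles, and the statement that AR-sequences generate all relations is an Auslander--Reiten--Butler theorem for \emph{finite} CM type, not available here. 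So the decisive non-vanishing is unproved. For comparison, the paper settles this case by first computing that $K_0(\CM(R))$ has rank one (via the surjection onto $K_0(\mod\,Q)\cong\Z$), so that the kernel of $\Z^2\to K_0(\CM(R))$ is infinite cyclic inside $\Z\cdot(1,-1)$, and then identifying it as $\Z\cdot(2,-2)$ by an explicit change of basis; to complete your route you would have to actually construct a homomorphism $K_0(\underline{\CM}(R))\to\Z/2\Z$ sending $[\underline{\p}]$ to $1$, which is the whole content of this case.
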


\begin{proof}
Let $R$ be as in Theorem \ref{thm0} and $d=1,2$.
By Propositions \ref{prop1} and \ref{prop2}, there is an epimorphism $\Z^2 \to K_0(\CM(R))$.
Indeed, sending the canonical basis $\{\binom{1}{0},\binom{0}{1}\}$ of $\Z^2$ to
\begin{align*}
\{[R],[R/(x_0)]\} \quad & \text{if }f=(A_\infty^1),(D_\infty^1)\text{ or }(A_\infty^2),\\
\{[R],[(x_0,x_2)]\} \quad & \text{if }f=(D_\infty^2)
\end{align*}
makes such a surjection.
We get an exact sequence $0 \to \Z^{2-r} \to \Z^2 \to K_0(\CM(R)) \to 0$, where $r$ is the rank of $K_0(\CM(R))$.
Let $Q$ be the total quotient ring of $R$.
For a commutative ring $A$, denote by $\mod\,A$ the category of finitely generated $A$-modules. 
Define homomorphisms $a:K_0(\CM(R))\to K_0(\mod\,R)$ and $b:K_0(\mod\,R) \to K_0(\mod\,Q)$ by $a([M])=[M]$ and $b([N])=[N\otimes_RQ]$ for $M\in\CM(R)$ and $N\in\mod\,R$.
For $N\in\mod\,R$, there is an exact sequence $0 \to X_n \to X_{n-1} \to \cdots \to X_0 \to N \to 0$ with $X_i\in\CM(R)$ for $0\le i\le n$.
(For instance, take a free resolution of $N$.)
Then in $K_0(\mod\,R)$ the equality $[N]=\sum_{i=0}^n(-1)^i[X_i]$ holds, which shows that $a$ is surjective.
Clearly, $b$ is also.
Taking the composition, we have a surjection $K_0(\CM(R))\to K_0(\mod\,Q)$.
As $Q$ is Artinian, every finitely generated $Q$-module has finite length, and $K_0(\mod\,Q)$ is the free $\Z$-module with basis $\{ [Q/\mathfrak M]\mid\mathfrak M\text{ is a maximal ideal of }Q\}$ (cf. \cite[(1.7)]{ARS}).

When $f=(A^1_{\infty})$, the ring $Q$ has a unique maximal ideal.
Hence $K_0(\mod\,Q)\cong\Z$.
Since there is an exact sequence $0\to R/(x_0)\to R\to R/(x_0)\to 0$, the equality $[R]=2[R/(x_0)]$ holds in $K_0(\CM(R))$.
Therefore we have $r=1$.
There is a commutative diagram
$$
\begin{CD}
0 @>>> \Z  @>{
\left(
\begin{smallmatrix}
1 \\
-2
\end{smallmatrix}
\right)}>>  \Z^2 @>>> K_0(\CM(R)) @>>> 0 \\
@. @| @V{\cong}V{
\left(
\begin{smallmatrix}
1 & 0 \\
2 & 1
\end{smallmatrix}
\right)
}V @VVV \\
0 @>>> \Z @>{
\left(
\begin{smallmatrix}
1 \\
0
\end{smallmatrix}
\right)}>> \Z^2 @>>> \Z @>>> 0
\end{CD}
$$
with exact rows.
Thus, the $\Z$-module $K_0(\CM(R))$ is isomorphic to $\Z$.

When either $f=(D^1_{\infty})$ or $f=(A^2_{\infty})$, the ring $Q$ has two maximal ideals, and we have $K_0(\mod\,Q)\cong\Z^2$. 
The $\Z$-module $K_0(\CM(R))$ is also isomorphic to $\Z^2$.

When $f=(D^2_{\infty})$, the ring $Q$ is a field.
Hence $K_0(\mod\,Q)\cong\Z$.
With the notation of the proof of Proposition \ref{prop2}(2), we have isomorphisms $\Coker\,\alpha^-\cong\Coker\,\alpha^+\cong(x_0,x_2)$ and an exact sequence $0\to\Coker\,\alpha^-\to R^2\to\Coker\,\alpha^+\to 0$.
Thus $K_0(\CM(R))$ has a relation $2[R]=2[(x_0,x_2)]$, and we see that $r=1$.
There is a commutative diagram
$$
\begin{CD}
0 @>>> \Z  @>{
\left(
\begin{smallmatrix}
2 \\
-2
\end{smallmatrix}
\right)}>>  \Z^2 @>>> K_0(\CM(R)) @>>> 0 \\
@. @| @V{\cong}V{
\left(
\begin{smallmatrix}
1 & 1 \\
0 & -1
\end{smallmatrix}
\right)
}V @VVV \\
0 @>>> \Z @>{
\left(
\begin{smallmatrix}
0 \\
2
\end{smallmatrix}
\right)}>> \Z^2 @>>> \Z\oplus\Z/2\Z @>>> 0
\end{CD}
$$
with exact rows.
Therefore $K_0(\CM(R))$ is isomorphic to $\Z \oplus \Z/2\Z$.
\end{proof}

Before moving to the case of higher dimension, we verify that the following relasionship exists between the Grothendieck group of $\CM(R)$ for a general Gorenstein complete local domain $R$ and that of $\underline{\CM}(R)$.

\begin{lemma}\label{k0}
Let $R$ be a complete Gorenstein local domain.
Then we have an exact sequence
$$
0 \to \langle [R] \rangle \xrightarrow{f} K_0(\CM(R)) \xrightarrow{g} K_0(\underline{\CM}(R)) \to 0
$$
of $\Z$-modules, where $f,g$ are natural maps.
Moreover, $\langle [R] \rangle \cong \Z$ holds.
\end{lemma}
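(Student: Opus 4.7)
The plan is to construct the map $g$ explicitly, observe that the surjectivity and the containment $\mathrm{Im}\,f\subseteq\ker g$ are essentially formal, and then produce an inverse on the quotient $K_0(\CM(R))/\langle[R]\rangle$ to prove the reverse containment. The second assertion $\langle[R]\rangle\cong\Z$ will follow by showing $[R]$ maps to a nonzero element under a suitable rank homomorphism.

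First I would define $g\colon K_0(\CM(R))\to K_0(\underline{\CM}(R))$ by $[M]\mapsto[\underline{M}]$. This is well-defined because every short exact sequence in $\CM(R)$ induces a distinguished triangle in $\underline{\CM}(R)$, and it is obviously surjective since every object of $\underline{\CM}(R)$ is represented by some object of $\CM(R)$. Since $[\underline{R}]=0$ in $K_0(\underline{\CM}(R))$, we have $g([R])=0$, so $f(\langle[R]\rangle)\subseteq\ker g$, and the composition $g\circ f$ is zero.

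The main obstacle is the reverse inclusion $\ker g\subseteq\langle[R]\rangle$. I would exploit the fact that $\CM(R)$ is a Frobenius exact category whose projective--injective objects are exactly the free $R$-modules (this uses $R$ Gorenstein). Concretely, I would define a candidate inverse $h\colon K_0(\underline{\CM}(R))\to K_0(\CM(R))/\langle[R]\rangle$ on generators by $[\underline{M}]\mapsto\overline{[M]}$ and check well-definedness in two steps. First, if $M\oplus R^a\cong N\oplus R^b$ in $\CM(R)$, then $[M]-[N]=(b-a)[R]\in\langle[R]\rangle$, so the value does not depend on the representative. Second, every distinguished triangle $L\to M\to N\to\Omega^{-1}L$ in $\underline{\CM}(R)$ lifts (by Frobenius theory) to a short exact sequence $0\to L\to M\oplus R^n\to N\to 0$ in $\CM(R)$ for some $n\ge0$, whence $[L]+[N]=[M]+n[R]\equiv[M]\pmod{\langle[R]\rangle}$. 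Thus $h$ is a well-defined homomorphism, and by construction $h\circ g$ is the canonical projection $K_0(\CM(R))\to K_0(\CM(R))/\langle[R]\rangle$; in particular $\ker g\subseteq\langle[R]\rangle$, which completes the exact sequence.

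Finally, for $\langle[R]\rangle\cong\Z$ I would use that $R$ is a Gorenstein complete local \emph{domain}, so every MCM $R$-module $M$ has a well-defined rank $\rank M=\dim_Q(M\otimes_RQ)$, where $Q$ is the fraction field, and rank is additive on short exact sequences. This gives a homomorphism $\rho\colon K_0(\CM(R))\to\Z$ with $\rho([R])=1$; hence $[R]$ has infinite order and $\langle[R]\rangle$ is free of rank one, as required.
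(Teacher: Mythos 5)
Your proposal is correct and follows essentially the same route as the paper: both establish exactness at the middle term by lifting distinguished triangles of $\underline{\CM}(R)$ to short exact sequences $0\to L\to M\oplus R^n\to N\to 0$ in $\CM(R)$ (you merely make the induced map $h$ on $K_0(\CM(R))/\langle[R]\rangle$ explicit, which the paper leaves implicit), and both obtain $\langle[R]\rangle\cong\Z$ from the rank homomorphism using that $R$ is a domain. No gaps.
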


\begin{proof}
It is trivial that $f$ is an injective map.
As an exact sequence $0 \to X \to Y \to Z \to 0$ of MCM $R$-modules induces an exact triangle $\underline{X} \to \underline{Y} \to \underline{Z} \to \underline{X}[1]$ in $\underline{\CM}(R)$, we have a well-defined map $g$.
Clearly, $g$ is surjective and $gf=0$.
Let $\underline{X} \to \underline{Y} \to \underline{Z} \to \underline{X}[1]$ be an exact triangle in $\underline{\CM}(R)$.
Then we have an exact sequence $0 \to X \to Y \oplus R^n \to Z \to 0$ of MCM $R$-modules, which gives an equality $[X] - [Y] + [Z] = n[R]$ in $K_0(\CM(R))$.
Thus we have the exact sequence in the lemma.
As to the last statement, we have a map $K_0(\CM(R))\to\Z$ given by $[M]\mapsto \rank_R\,M$, where $\rank_R\,M$ denotes the rank of $M$.
The restriction of this map to $\langle [R] \rangle$ is the inverse map of the natural surjection $\Z\to\langle [R] \rangle$.
\end{proof}

The Grothendieck group of $\underline{\CM}(R)$ for a hypersurface $R$ of countable CM representation type is described as follows.

\begin{proposition}\label{prop4}
With the notation of Theorem \ref{thm0}, for $m \geq 1$ we have
$$
K_0(\underline{\CM}(R))\cong
\begin{cases}
\Z/2\Z & \text{if }f = (A_{\infty}^{2m-1})\text{ or }f = (D_{\infty}^{2m}),\\
\Z & \text{if }f = (D_{\infty}^{2m-1})\text{ or }f = (A_{\infty}^{2m}).
\end{cases}
$$
\end{proposition}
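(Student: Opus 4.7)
The plan is to combine Lemma \ref{k0} with Theorem \ref{thm0}(2), Kn\"orrer's periodicity, and the low-dimensional computations of Proposition \ref{1755}.

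First I would show that $K_0(\underline{\CM}(R))$ is cyclic with generator $[\underline{X_R}]$. By Lemma \ref{k0}, we have $K_0(\underline{\CM}(R))\cong K_0(\CM(R))/\langle[R]\rangle$, so I work modulo $[R]$. The short exact sequence $0\to\Omega X_R\to R^{n}\to X_R\to 0$ coming from the minimal free resolution of $X_R$ gives $[X_R]+[\Omega X_R]=n[R]$ in $K_0(\CM(R))$, hence $[\underline{\Omega X_R}]=-[\underline{X_R}]$. By Theorem \ref{thm0}(2), every indecomposable $M\in\mathcal{P}(R)$ satisfies $[M]=[L]+[N]-n[R]$ for some $L,N\in\mathcal{M}(R)=\{X_R,\Omega X_R\}$ and $n\ge 0$; together with $\mathcal{M}(R)\subseteq\Z[\underline{X_R}]$, this forces $[\underline{M}]\in\Z[\underline{X_R}]$. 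Since every indecomposable MCM module lies in either $\mathcal{M}(R)$ or $\mathcal{P}(R)$ (or is $R$ itself), we conclude $K_0(\underline{\CM}(R))=\Z[\underline{X_R}]$.

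Next I would determine the order of $[\underline{X_R}]$ by reducing to the base cases via Kn\"orrer's periodicity. For $d\ge3$, Lemma \ref{knorrer} yields a triangle equivalence $F\colon\underline{\CM}(R')\xrightarrow{\cong}\underline{\CM}(R)$ where $R'=k[[x_0,\ldots,x_{d-2}]]/(f')$ is of the same Dynkin type ($A$ or $D$) in dimension $d-2$. Triangle equivalences induce isomorphisms on $K_0$, so the claim for $R$ reduces to the claim for $R'$. Iterating, one is reduced to $d=1$ for the odd-dimensional cases $(A_\infty^{2m-1}), (D_\infty^{2m-1})$ and to $d=2$ for the even-dimensional cases $(A_\infty^{2m}), (D_\infty^{2m})$.

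For the base cases I would apply Proposition \ref{1755} together with Lemma \ref{k0}, which gives $K_0(\underline{\CM}(R))\cong K_0(\CM(R))/\langle[R]\rangle$. For $(A_\infty^1)$: $K_0(\CM(R))\cong\Z$ generated by $[R/(x_0)]$ with $[R]=2[R/(x_0)]$, giving $\Z/2\Z$. For $(D_\infty^1)$ and $(A_\infty^2)$: $K_0(\CM(R))\cong\Z^2$ freely generated by $[R]$ together with the $\mathcal{M}(R)$-generator, so the quotient is $\Z$. For $(D_\infty^2)$: $K_0(\CM(R))\cong\Z\oplus\Z/2\Z$ with defining relation $2[R]=2[(x_0,x_2)]$, and quotienting by $[R]$ reduces the presentation to $\Z/2\Z$. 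These four computations match the two claimed answers after grouping odd/even indices and $A/D$ types.

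The main obstacle is the $(D_\infty^2)$ base case, where one must carefully track how the single nontrivial relation $2[R]-2[(x_0,x_2)]=0$ interacts with the quotient by $[R]$ to produce precisely the $\Z/2\Z$ torsion; elsewhere the computation is routine once Kn\"orrer's periodicity has done the work of identifying $[\underline{X_R}]$ (the nonfree direct summand of $FX_{R'}$) with the image of $[\underline{X_{R'}}]$ under the $K_0$-isomorphism.
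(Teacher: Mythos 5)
Your proof is correct and follows essentially the same route as the paper's: reduce to dimensions $1$ and $2$ via Kn\"orrer periodicity (a triangle equivalence induces an isomorphism of Grothendieck groups) and then read the answer off the low-dimensional classifications of Propositions \ref{prop1} and \ref{prop2}, which is exactly what the paper's two-line proof does, with your cyclicity argument and the quotient computations supplying the details the paper leaves implicit. The one caveat is that Lemma \ref{k0} is stated for complete Gorenstein local \emph{domains}, while the base-case rings for $(A_\infty^1)$, $(D_\infty^1)$ and $(A_\infty^2)$ are not domains; however, the only part you invoke, the isomorphism $K_0(\underline{\CM}(R))\cong K_0(\CM(R))/\langle[R]\rangle$, is established in the proof of that lemma without using the domain hypothesis (which is needed only for $\langle[R]\rangle\cong\Z$ via the rank map), so your argument goes through.
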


\begin{proof}
By virtue of Kn\"{o}rrer's periodicity (Lemma \ref{knorrer}), we have only to deal with the cases $\dim R=1,2$.
Using Propositions \ref{prop1} and \ref{prop2}, we obtain the assertion.
\end{proof}

\begin{proposition}\label{prop5}
With the notation of Theorem \ref{thm0}, for $m\ge1$ we have
$$
K_0(\CM(R))\cong
\begin{cases}
\Z \oplus \Z/2\Z & \text{if }f = (A^{2m+1}_{\infty})\text{ or } f = (D^{2m+2}_{\infty}),\\
\Z^2 & \text{if }f = (D^{2m+1}_{\infty})\text{ or }f = (A^{2m+2}_{\infty}).
\end{cases}
$$
\end{proposition}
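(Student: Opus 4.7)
The plan is to combine Lemma \ref{k0} with a rank argument; once this is in hand, Proposition \ref{prop4} immediately produces the required computation, and the four cases fall out by simple substitution.

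First I would verify that, in each of the four families appearing in the statement, $R$ is a Gorenstein complete local domain so that Lemma \ref{k0} applies. The Gorenstein property is automatic for any hypersurface, and since our hypotheses force $d\ge 3$, the polynomial $f$ is irreducible in $k[[x_0,\ldots,x_d]]$: for $(A_\infty^d)$ this is because $f$ is a non-degenerate quadratic form in at least three variables over an algebraically closed field of characteristic $\neq 2$; for $(D_\infty^d)$ a direct calculation with initial forms (writing $f=x_2^2+c$ with $c=x_0^2x_1+x_3^2+\cdots+x_d^2$ and checking that $-c$ is not a square in $k[[x_0,x_1,x_3,\ldots,x_d]]$) excludes any nontrivial factorization. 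Lemma \ref{k0} then furnishes the exact sequence
$$
0 \to \langle[R]\rangle \to K_0(\CM(R)) \to K_0(\underline{\CM}(R)) \to 0
$$
with $\langle[R]\rangle\cong\Z$.

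Next I would split this sequence using the rank homomorphism $\rho\colon K_0(\CM(R))\to\Z$ defined by $[M]\mapsto\dim_Q(M\otimes_R Q)$, where $Q$ denotes the field of fractions of $R$. This is a well-defined homomorphism, as $Q$ is $R$-flat so tensoring preserves exactness and $Q$-vector space dimension is additive in short exact sequences. Since $\rho([R])=1$, the composite $\Z=\langle[R]\rangle\hookrightarrow K_0(\CM(R))\xrightarrow{\rho}\Z$ is the identity on $\Z$, so $\rho$ gives a splitting of the above sequence, yielding
$$
K_0(\CM(R)) \cong \Z \oplus K_0(\underline{\CM}(R)).
$$

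Finally, I would substitute the values of $K_0(\underline{\CM}(R))$ furnished by Proposition \ref{prop4}: it is $\Z/2\Z$ for $f=(A_\infty^{2m+1})$ or $(D_\infty^{2m+2})$, producing $\Z\oplus\Z/2\Z$, and it is $\Z$ for $f=(D_\infty^{2m+1})$ or $(A_\infty^{2m+2})$, producing $\Z^2$. The only real technical point is the irreducibility verification that makes $R$ a domain; everything else is assembled from the exact sequence of Lemma \ref{k0}, the rank map, and the already-established Proposition \ref{prop4}, with no further computation needed.
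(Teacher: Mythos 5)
Your proof is correct, and it takes a genuinely different route from the paper's for the torsion cases. The paper argues in two regimes: when $K_0(\underline{\CM}(R))\cong\Z$ it invokes Lemma \ref{k0} and Proposition \ref{prop4} and uses that an extension of $\Z$ by $\Z$ automatically splits; but when $K_0(\underline{\CM}(R))\cong\Z/2\Z$ the extension $0\to\Z\to K_0(\CM(R))\to\Z/2\Z\to0$ need not split a priori, so the paper instead produces explicit self-extensions through Kn\"orrer's functor (yielding the relations $2^m[R]=2[F((x_0),(x_0))]$ and $2^{m+1}[R]=2[F(\alpha^+,\alpha^-)]$) and finishes with a diagram chase on presentations by $\Z^2$. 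You dispose of all four cases at once by observing that the rank homomorphism $\rho\colon K_0(\CM(R))\to\Z$, $[M]\mapsto\dim_Q(M\otimes_RQ)$ --- well defined by flatness of $Q$, and already introduced in the last lines of the proof of Lemma \ref{k0} --- restricts to an isomorphism on $\langle[R]\rangle$ and hence retracts the inclusion, so the sequence of Lemma \ref{k0} splits and $K_0(\CM(R))\cong\Z\oplus K_0(\underline{\CM}(R))$ unconditionally; substituting Proposition \ref{prop4} then finishes the computation. Your approach buys uniformity and eliminates the matrix-factorization computations entirely, at the cost of losing the explicit relation between $[R]$ and $[X_R]$ that the paper's diagrams exhibit (and which feeds into the description $K_0(\CM(R))=\langle[R],[X_R]\rangle$ in Corollary \ref{cor3}). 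Your attention to the domain hypothesis is also welcome: the paper merely asserts that $R$ is a domain for $m\ge1$, whereas your irreducibility check for $(A_\infty^d)$ is complete, and the one for $(D_\infty^d)$ is correct in outline (to make it airtight one should note that $f$ is a Weierstrass polynomial of degree $2$ in $x_2$, so reducibility is indeed equivalent to $-c$ being a square, and then rule that out by an order count on initial forms).
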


\begin{proof}
Let $R$ be as in Theorem \ref{thm0}.
As $m\ge1$, the ring $R$ is an integral domain.
Tensoring the quotient field $Q$ of $R$ induces a surjection $K_0(\CM(R))\to K_0(\mod\,Q)$ (this is nothing but the composition $ba$ with the notation in the proof of Proposition \ref{1755}), and $K_0(\mod\,Q)\cong\Z$ as $Q$ is a field.
Hence $K_0(\CM(R))$ has a direct summand isomorphic to $\Z$.
We see from Theorem \ref{thm0}(2) that $K_0(\CM(R))\cong\Z\oplus\Z/e\Z$ for some $e\ge0$.
Let $F:\underline{\CM}(R')\to\underline{\CM}(R)$ be the $m$-th power of the triangle equivalence given in Lemma \ref{knorrer}, where $R'$ is the corresponding hypersurface of dimension $1$ or $2$.

When $f$ is either $(D^{2m+1}_{\infty})$ or $(A^{2m+2}_{\infty})$, Proposition \ref{prop4} and Lemma \ref{k0} imply the $\Z$-isomorphism $K_0(\CM(R)) \cong \Z^2$.

Let $f=(A^{2m+1}_{\infty})$.
Then, since there is an exact sequence $0\to F((x_0),(x_0))\to R^{2^m}\to F((x_0),(x_0))\to 0$, the equality $2^m[R]=2[F((x_0),(x_0))]$ holds in $K_0(\CM(R))$.
This gives a commutative diagram
$$
\begin{CD}
0 @>>> \Z  @>{
\left(
\begin{smallmatrix}
2^m \\
-2
\end{smallmatrix}
\right)}>> \Z^2 @>>> K_0(\CM(R)) @>>> 0 \\
@. @| @V{\cong}V{
\left(
\begin{smallmatrix}
1 & 2^{m-1} \\
0 & -1
\end{smallmatrix}
\right)
}V @VVV \\
0 @>>> \Z @>{
\left(
\begin{smallmatrix}
0 \\
2
\end{smallmatrix}
\right)}>> \Z^2 @>>> \Z\oplus\Z/2\Z @>>> 0
\end{CD}
$$
with exact rows.
Thus, the $\Z$-module $K_0(\CM(R))$ is isomorphic to $\Z\oplus\Z/2\Z$.

Let $f=(D^{2m+2}_{\infty})$.
Then, with the notation of the proof of Proposition \ref{prop2}(2), we have an isomorphism $F(\alpha^-,\alpha^+) \cong F(\alpha^+,\alpha^-)$ and an exact sequence $0\to F(\alpha^-,\alpha^+)\to R^{2^{m+1}}\to F(\alpha^+,\alpha^-)\to 0$.
Hence $K_0(\CM(R))$ has a relation $2^{m+1}[R]=2[F(\alpha^+,\alpha^-)]$.
A commutative diagram
$$
\begin{CD}
0 @>>> \Z  @>{
\left(
\begin{smallmatrix}
2^{m+1} \\
-2
\end{smallmatrix}
\right)}>> \Z^2 @>>> K_0(\CM(R)) @>>> 0 \\
@. @| @V{\cong}V{
\left(
\begin{smallmatrix}
1 & 2^m \\
0 & -1
\end{smallmatrix}
\right)
}V @VVV \\
0 @>>> \Z @>{
\left(
\begin{smallmatrix}
0 \\
2
\end{smallmatrix}
\right)}>> \Z^2 @>>> \Z\oplus\Z/2\Z @>>> 0
\end{CD}
$$
with exact rows exists, which shows that $K_0(\CM(R))$ is isomorphic to $\Z \oplus \Z/2\Z$.
\end{proof}





\begin{thebibliography}{99}

\bibitem{ARS}
{\sc M. Auslander}; {\sc I. Reiten}; {\sc S. O. Smal\o}, {\it Representation Theory of Artin Algebra}, Cambridge Studies in Advanced Mathematics, 36, {Cambridge University Press, Cambridge}, 1995.

\bibitem{BGS}
{\sc R.-O. Buchweitz}; {\sc G.-M. Greuel}; {\sc F.-O. Schreyer}, Cohen-Macaulay modules on hypersurface singularities II, {\it Invent. math.} {\bf 88} (1987), 165--182.

\bibitem{BD}
{\sc I. Burban}; {\sc Y. Drozd}, Maximal Cohen-Macaulay modules over surface singularities, {\it Trends in representation theory of algebras and related topics, EMS Ser. Congr. Rep., Eur. Math. Soc., Zurich}, (2008), 101--166, \texttt{http://arXiv.org/abs/0803.0117}.

\bibitem{E}
{\sc D. Eisenbud}, Homological algebra on a complete intersection, with an application to group representations, {\it Trans. Amer. Math. Soc.} {\bf 260} (1980), 35--64.

\bibitem{GK}
{\sc G.-M. Greuel}; {\sc H. Kr\"oning}, Simple singularities in positive characteristic, {\it Math. Z.} {\bf 203} (1990), 339--354.

\bibitem{K}
{\sc H. Kn\"orrer}, Cohen-Macaulay modules on hypersurface singularities I, {\it Invent. math.} {\bf 88} (1987), 153--164.

\bibitem{R}
{\sc R. Rouquier}, Dimensions of triangulated categories, {\it J. K-Theory} {\bf 1} (2008), 193--256.

\bibitem{Sc}
{\sc F.-O. Schreyer}, Finite and countable CM-representation type, Singularities, representation of algebras, and vector bundles, {\it Springer Lecture Notes in Math.} {\bf 1273} (1987), 9--34.

\bibitem{res}
{\sc R. Takahashi}, Modules in resolving subcategories which are free on the punctured spectrum, {\it Pacific J. Math.} {\bf 241} (2009), no. 2, 347--367.

\bibitem{stcm}
{\sc R. Takahashi}, Classifying thick subcategories of the stable category of Cohen-Macaulay modules, {\it Adv. in Math.} {\bf 225} (2010), 2076--2116.

\bibitem{Y}
{\sc Y. Yoshino}, {\it Cohen-Macaulay modules over Cohen-Macaulay rings}, London mathematical Society Lecture Note Series, 146, {Cambridge University Press, Cambridge}, 1990.
\end{thebibliography}
\end{document}